\newtheorem{theorem}{Theorem}
\newtheorem{lemma}{Lemma}
\newtheorem{prop}{Proposition}
\newtheorem{remark}{Remark}
\theoremstyle{definition}
\def\re{\mathbb{R}}
\def\N{\mathbb{N}}
\def\({\left(}
\def\){\right)}
\def\[{\left[}
\def\]{\right]}
\def\pd{\partial}
\def\lap{\Delta}
\def\ep{\varepsilon}
\def\w{\omega}
\def\la{\lambda}
\def\D{\mathcal{D}}
\def\M{\mathcal{M}}
\begin{document}

\begin{frontmatter}



\title{Minimization problem associated with an improved Hardy-Sobolev type inequality}


\author[M]{Megumi Sano}
\ead{smegumi@hiroshima-u.ac.jp}
\address[M]{Laboratory of Mathematics, Graduate School of Engineering,
Hiroshima University, Higashi-Hiroshima, 739-8527, Japan}

\begin{keyword}
Hardy-Sobolev inequality \sep Optimal constant \sep Extremal function

\MSC[2010] 35A23 \sep 35J20 \sep 35A08 
\end{keyword}

\date{\today}

\begin{abstract}
We consider the existence and the non-existence of a minimizer of the following minimization problems associated with an improved Hardy-Sobolev type inequality introduced by Ioku \cite{I}.
\begin{align*}
I_a := \inf_{u \in W_0^{1,p}(B_R ) \setminus \{ 0\} } \dfrac{\int_{B_R} |\nabla u |^{p} \,dx}{\( \int_{B_R} |u|^{p^*(s)} V_a(x) \,dx \)^{\frac{p}{p^*(s)}}}, \,\,\text{where}\,\, V_a (x) =\frac{1}{|x|^s \( 1- a \,\( \frac{|x|}{R} \)^{\frac{N-p}{p-1}} \)^\beta} \ge \frac{1}{|x|^s}.
\end{align*}
Only for radial functions, the minimization problem $I_a$ is equivalent to it associated with the classical Hardy-Sobolev inequality on $\re^N$ via a transformation. 
First, we summarize various transformations including that transformation and give a viewpoint of such transformations. 
As an application of this viewpoint, we derive {\it an infinite dimensional form} of the classical Sobolev inequality in some sense. 
Next, without the transformation, we investigate the minimization problems $I_a$ on balls $B_R$. 
In contrast to the classical results for $a=0$, we show the existence of non-radial minimizers for the Hardy-Sobolev critical exponent $p^* (s)=\frac{p (N-s)}{N-p}$ on bounded domains. 
Finally, we give remarks of a different structure between two nonlinear scalings which are equivalent to the usual scaling only for radial functions under some transformations. 
\end{abstract}

\end{frontmatter}



%
%
\section{Introduction and main results}\label{intro}

Let $B_R \subset \re^N, 1 < p < N, 0 \le s \le p, p^* (s) = \frac{p (N-s)}{N-p}$ and $p^* = p^*(0)$. 
Then the classical Hardy-Sobolev inequality:
\begin{equation}
\label{HS}
C_{N, p, s} \( \int_{B_R} \frac{|u|^{p^* (s)}}{|x|^s} dx \)^{\frac{p}{p^* (s)}} \le \int_{B_R} | \nabla u |^p dx
\end{equation}
holds for all $u \in W^{1,p}_0(B_R)$, where $W_0^{1,p}(B_R)$ is the completion of $C_c^{\infty}(B_R)$ with respect to the norm $\| \nabla (\cdot )\|_{L^p(B_R)}$ and $C_{N,p,s}$ is the best constant of (\ref{HS}). 
In the case where $s=0$ (resp. $s=p$), the inequality (\ref{HS}) is called the Sobolev (resp. Hardy) inequality. The Hardy-Sobolev inequality (\ref{HS}) is quite fundamental and important since it expresses the embeddings of the Sobolev spaces $W_0^{1,p}$. Furthermore the variational problems and partial differential equations associated with the Hardy-Sobolev inequality (\ref{HS}) are well-studied by many mathematicians so far, see \cite{Au}, \cite{T}, \cite{L}, \cite{BG}, \cite{BV}, \cite{VZ}, to name a few.

Recently, Ioku \cite{I} showed the following improved Hardy-Sobolev inequality for radial functions via the transformation (\ref{trans}), see \S \ref{Trans}. 
\begin{equation}\label{IHS}
C_{N, p, s} \( \int_{B_R} \frac{|u|^{p^* (s)}}{|x|^s \( 1- \( \frac{|x|}{R} \)^{\frac{N-p}{p-1}} \)^{\beta}} dx \)^{\frac{p}{p^* (s)}} \le \int_{B_R} | \nabla u |^p dx\,\,\text{for} \,\, u \in W_{0, {\rm rad}}^{1,p}(B_R),
\end{equation}
where $\beta = \beta (s)= \frac{(N-1)p - (p-1)s}{N-p}$. 
One virtue of (\ref{IHS}) is that we can take a limit directly for the improved inequality (\ref{IHS}) as $p \nearrow N$, differently from the classical one. Indeed, Ioku \cite{I} showed that the limit of the improved inequality (\ref{IHS}) with $s=0$ is the Alvino inequality \cite{Al} which implies the optimal embedding of $W_0^{1,N}(B_R)$ into the Orlicz spaces, and also the limit of the improved inequality (\ref{IHS}) with $s=p$ is the critical Hardy inequality which implies the embedding of $W_0^{1,N} (B_R)$ into the Lorentz-Zygmund spaces $L^{\infty, N}(\log L)^{-1}$ which is smaller than the Orlicz space. 
For some indirect limiting procedures for the classical Hardy-Sobolev inequalities, see \cite{Tru}, \cite{BP}, \cite{SS} and a survey \cite{S(RIMS)}. 
Based on the transformation (\ref{trans}), the improved inequality (\ref{IHS}) on $B_R$ equivalently connects to the classical one  (\ref{HS}) on the whole space $\re^N$. This yields that the improved inequality (\ref{IHS}) has the scale invariance under a scaling to which the usual scaling is changed via the transformation (\ref{trans}), and there exists a radial minimizer of (\ref{IHS}) when $0 \le s < p$. For more details, see \cite{I} or \S \ref{Trans}.


In this paper, without the transformation, we investigate the following extended minimization problems $I_a$ for $a \in [0,1]$ associated with improved Hardy-Sobolev inequalities. 
\begin{align*}
I_{a} := \inf_{u \in W_{0}^{1,p}(B_R) \setminus \{ 0\} } \frac{\int_{B_R} | \nabla u |^p \,dx}{\( \int_{B_R} |u|^{p^* (s)} V_a (x) \,dx \)^{\frac{p}{p^* (s)}}}, \,\text{where}\, V_a(x) =\frac{1}{|x|^s \( 1- a \,\( \frac{|x|}{R} \)^{\frac{N-p}{p-1}} \)^\beta} \ge \frac{1}{|x|^s}.
\end{align*}
Note that the potential function $V_a(x)$ also has the boundary singularity when $a=1$. Due to the boundary singularity, $I_1 = 0$ if $a=1$ and $s < p$, see Proposition \ref{Posi I_a} in \S \ref{Proof}. Therefore we exclude the case where $a=1$ and $s < p$. 

Our main results are as follows.


\begin{theorem}\label{Main} (i) Let $s=p$. Then for any $a \in [0, 1]$, $I_a =I_{a, {\rm rad}} = C_{N, p, p} = (\frac{N-p}{p})^p$ and $I_a$ is not attained.\\
(ii) Let $s=0$. Then $I_a =I_{a, {\rm rad}} (1-a)^{\frac{N-1}{N} p}= C_{N, p, 0} (1-a)^{\frac{N-1}{N} p}$ and $I_a$ is not attained for any $a \in [0,1)$.\\
(iii) Let $0 < s < p$. Then there exists $a_* \in (A, 1)$ such that $I_a < I_{a, {\rm rad}}$ for $a \in (a_*, 1)$, $I_a$ is attained for $a \in (a_*, 1)$, $I_a=I_{a, {\rm rad}}$ for $a \in [0, a_*]$ and $I_a$ is not attained for $a \in [0, a_*)$, where $A=1- \( \frac{s (p-1)}{p (N-1)} \)^{\frac{s}{\beta}} \left[ 1- \( \frac{s (p-1)}{p (N-1)} \)\right]$.
\end{theorem}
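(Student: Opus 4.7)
The plan is to treat the three cases of Theorem \ref{Main} separately. Two structural observations are central. First, the transformation described in \S\ref{Trans} reduces the radial problem $I_{a,{\rm rad}}$ to the classical Hardy-Sobolev problem on $\re^N$ via an explicit change of variables, producing a closed-form value for $I_{a,{\rm rad}}$ in each case. Second, the weight $V_a$ is pointwise non-decreasing in $a$ and has a boundary behavior of rate $(1-a(|x|/R)^{(N-p)/(p-1)})^{-\beta}$, so non-radial test functions adapted to $\partial B_R$ are the natural candidates for lowering the Rayleigh quotient below the radial value.

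For part (i), the Hardy case $p^*(s)=p$: monotonicity of $V_a$ in $a$ combined with classical Hardy gives $I_a \le I_0 = C_{N,p,p}$; the improved inequality (\ref{IHS}) specialised to $s=p$, $a=1$ gives $I_{1,{\rm rad}} \ge C_{N,p,p}$; and Hardy quasi-extremals concentrating at the origin (where $V_a \sim |x|^{-p}$) furnish the matching upper bound in the full class. Non-attainment follows because a minimizer would, via $V_a \ge |x|^{-p}$, saturate classical Hardy, impossible on $B_R$. For part (ii), the Sobolev case $s=0$: the transformation yields $I_{a,{\rm rad}} = C_{N,p,0}$; the uniform bound $V_a(x) \le (1-a)^{-\beta}$ on $B_R$ combined with classical Sobolev gives $I_a \ge (1-a)^{\beta p/p^*}C_{N,p,0}$, with $\beta p/p^* = (N-1)p/N$ by direct computation, and concentrating a rescaled Aubin-Talenti profile at a boundary point $x_0 \in \partial B_R$ (where $V_a \to (1-a)^{-\beta}$) gives the matching upper bound. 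Non-attainment follows because any minimizing sequence is forced to concentrate.

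For part (iii), the main case $0<s<p$: first compute $I_{a,{\rm rad}}$ from the transformation. Then, for each $a$ sufficiently close to $1$, construct a non-radial test family adapted to the boundary behavior of $V_a$ to obtain an explicit upper bound $\varphi(a)$ on $I_a$, strictly decreasing in $a$. Defining $a_*$ by the matching equation $\varphi(a_*) = I_{a_*,{\rm rad}}$, the inequality $a_* > A$ follows from direct algebraic comparison of $\varphi(A)$ with $I_{A,{\rm rad}}$. For $a > a_*$, $I_a \le \varphi(a) < I_{a,{\rm rad}}$ and attainment is obtained by a concentration-compactness analysis: minimizing sequences cannot concentrate at the origin (limit value $I_{a,{\rm rad}}$) nor at $\partial B_R$ (limit value at least $\varphi(a)$), so they have a non-trivial strong limit which must be the minimizer. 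For $a \le a_*$, the boundary-concentration bound does not beat the radial one, so a rearrangement-monotonicity comparison gives $I_a = I_{a,{\rm rad}}$; non-attainment for $a < a_*$ follows from the classical Hardy-Sobolev non-attainment on $B_R$.

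The main obstacle is the concentration-compactness step of part (iii). The two competing singular behaviors of $V_a$ — Hardy-Sobolev scaling at the origin and boundary scaling at $\partial B_R$ — produce distinct concentration thresholds that must both strictly exceed $I_a$ in the regime $a > a_*$. Identifying the threshold $a_*$ precisely, matching the boundary-concentration profile with the radial extremal via the transformation, and verifying the explicit lower bound $a_* > A$ will require delicate algebraic bookkeeping involving the Hardy-Sobolev constant $C_{N,p,s}$ and the exponents $\beta$ and $p^*(s)$.
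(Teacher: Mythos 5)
Your outline for part (ii) essentially reproduces the paper's argument (reduce to a Sobolev quotient with a bounded continuous weight, concentrate an Aubin--Talenti profile at the maximum point of the weight on $\overline{B_R}$), but there are genuine gaps elsewhere. In part (i) you establish only the upper bound $I_a\le C_{N,p,p}$ and the \emph{radial} lower bound $I_{a,{\rm rad}}\ge C_{N,p,p}$; nothing in your plan gives $I_a\ge C_{N,p,p}$ over all of $W_0^{1,p}(B_R)$. Rearrangement is unavailable because $V_a$ fails to be radially decreasing once $a>\frac{s(p-1)}{p(N-1)}$, and the transformation (\ref{trans}) preserves the gradient norm only for radial functions. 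The paper closes this with a separate device (Lemma \ref{radial nomi}): since $p^*(p)=p$, both sides of the inequality are $p$-homogeneous, so one may replace $w$ by its spherical $L^p$-average $W(r)=\bigl(\omega_{N-1}^{-1}\int_{\mathbb{S}^{N-1}}|w(r\omega)|^p\,dS_{\omega}\bigr)^{1/p}$, which preserves the weighted norm and does not increase $\|\partial_r w\|_{L^p}$. Some such mechanism is indispensable for the Hardy case and is absent from your sketch.

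In part (iii), defining $a_*$ by the matching equation $\varphi(a_*)=I_{a_*,{\rm rad}}$, with $\varphi$ an upper bound produced by one particular test family, cannot yield $I_a=I_{a,{\rm rad}}$ for $a\le a_*$: an upper bound from one family says nothing about whether some other non-radial competitor beats the radial level, and the ``rearrangement-monotonicity comparison'' you invoke there fails for the same reason as in part (i). Moreover, for fixed $a<1$ and $0<s<p$ the exponent $p^*(s)$ is subcritical away from the origin and $V_a$ is bounded near $\partial B_R$, so a boundary-concentrating family makes the Rayleigh quotient blow up rather than furnishing a useful $\varphi(a)$; the paper instead proves $I_1=0$ (using the genuine boundary singularity at $a=1$), proves continuity of $a\mapsto I_a$ at $a=1$ by a dilation argument, and defines $a_*$ as the threshold where the monotone continuous function $I_a$ drops below the constant $C_{N,p,s}$. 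The bound $a_*>A$ is then a separate rearrangement argument comparing $V_{A+\epsilon}^{\#}$ with $V_1$, whose radial level is still $C_{N,p,s}$. Finally, your non-attainment claim for $0<a<a_*$ ``from classical Hardy--Sobolev non-attainment'' points the inequality the wrong way: since $V_a\ge|x|^{-s}$, a minimizer of $I_a$ only yields a classical quotient \emph{larger} than $C_{N,p,s}$, which is no contradiction. The working argument compares with the \emph{larger} weight $V_{a_*}$, whose level is still $I_{a,{\rm rad}}$, and uses positivity of the minimizer (regularity plus the strong maximum principle) to make that comparison strict.
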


\begin{remark}
Note that $\frac{s (p-1)}{p(N-1)} < A$ and the potential function $V_a(x)$ is not monotone-decreasing with respect to $|x|$ for $a \in (\frac{s(p-1)}{p(N-1)}, 1]$. Therefore it seems difficult to reduce the radial setting due to the lack of the rearrangement technique, see the first part in \S \ref{Proof}. However we can show $I_a=I_{a, {\rm rad}}$ even for $a \in (\frac{s (p-1)}{p(N-1)}, a_*]$ thanks to the special shape of the potential function $V_a(x)$, see the last part of the proof of Theorem \ref{Main} (iii).  
\end{remark}


Our minimization problem $I_a$ is related to the following nonlinear elliptic equation with the singular potential $V_a(x) \ge |x|^{-s}$.
\begin{align}\label{EL}
\begin{cases}
-\text{div} \,( \, |\nabla u|^{p-2} \nabla u \,) = b V_a(x) |u|^{p^*(s) -2}u \quad &\text{in} \,\, B_R, \\
\qquad u = 0  &\text{on} \,\, \pd B_R.
\end{cases}
\end{align}
The minimizer for $I_a$ is a ground state solution of the Euler-Lagrange equation (\ref{EL}) with a Lagrange multiplier $b$. 
Since the minimizer of Theorem \ref{Main} (iii) is a non-radial function, we can observe that the symmetry breaking phenomenon of the ground states of the elliptic equation (\ref{Eq}) occurs when $f(x)= V_a(x)$, $a$ is close to $1$, $s \in (0,p)$ and $p<q=p^*(s) < p^*$.
\begin{align}\label{Eq}
\begin{cases}
-\text{div} \,( \, |\nabla u|^{p-2} \nabla u \,) = f(x) \,|u|^{q -2}u \quad &\text{in} \,\, B_R, \\
\hspace{6.5em} u = 0  &\text{on} \,\, \pd B_R.
\end{cases}
\end{align}
It is well-known that the symmetry breaking phenomenon of the ground states of (\ref{Eq}) occurs when $f(x)=\( \frac{|x|}{R} \)^\alpha$, $\alpha >0$ is sufficiently large and $p<q < p^*$ (ref. \cite{SSW}). Theorem \ref{Main} (iii) gives another example of the potential function $f(x)$,  which is not monotone-increasing and is not bounded differently from the H\'enon potential function $\( \frac{|x|}{R} \)^\alpha$.


In the case where $a=0$, Theorem \ref{Main} is the same as the classical result which is the non-existence of the minimizer of the classical Hardy-Sobolev inequality on bounded domains. However, in Theorem \ref{Main} (iii), we see that there exists a minimizer of $I_a$ even on bounded domains. This is different from the classical result. Concretely, we show that
$I_{a, \text{rad}}$ is the concentration level of minimizing sequence of $I_a$ for $s \in (0,p)$, where $I_{a, \text{rad}}$ is a level of $I_a$ only for radial functions. 
By concentration-compactness alternative, we can obtain a minimizer of $I_a$ for $a \in (a_*, 1)$. 
Note that the continuous embedding: $W_0^{1,p}(B_R) \hookrightarrow L^{p^*(s)}(B_R; V_a(x)\,dx)$ related to our problem is not compact due to the existence of a non-compact sequence given by the nonlinear scaling (\ref{new scale}) in \S \ref{Trans}, see also \S \ref{App}. 
However, the parameter of $a$ plays a role of lowering the level of $I_a$ than $I_{a, \text{rad}}$, and thanks to it, we can remove the possibility of occurring such non-compact behavior. 


This paper is organized as follows:
In \S \ref{Trans}, we summarize various transformations including the transformation and give a viewpoint of such transformations by which the classical Hardy-Sobolev inequality equivalently connects to another inequality.  As an application of this viewpoint, we derive {\it an infinite dimensional form} of the classical Sobolev inequality in some sense. 
In \S \ref{Proof}, we prepare several Lemmas and Propositions and show Theorem \ref{Main}. 
In \S \ref{App}, we give remarks of a different structure between two nonlinear scalings which are equivalent to the usual scaling only for radial functions under some transformations. 


We fix several notations: 
$B_R$ or $B_R^N$ denotes a $N$-dimensional ball centered $0$ with radius $R$. As a matter of convenience, we set $B_\infty^N = \re^N$ and $\frac{1}{\infty} = 0$.  
$\omega_{N-1}$ denotes an area of the unit sphere $\mathbb{S}^{N-1}$ in $\re^N$. $|A|$ denotes the Lebesgue measure of a set $A \subset \re^N$ and 
$X_{{\rm rad}} = \{ \, u \in X \, | \, u \,\,\text{is radial} \, \}$. 
The Schwarz symmetrization $u^{\#} \colon \re^N \to [0, \infty]$ of $u$ is given by 
\begin{align*}
u^{\#}(x) = u^{\#}(|x|)=\inf \left\{ \tau >0 \,: \, | \{ y \in \re^N \, :\, |u(y)| > \tau \} \,| \le |B_{|x|}(0) | \right\}.
\end{align*}
Throughout the paper, if a radial function $u$ is written as $u(x) = \tilde{u}(|x|)$ by some function $\tilde{u} = \tilde{u}(r)$, we write $u(x)= u(|x|)$ with admitting some ambiguity.

%
%

\section{Various transformations and an infinite dimensional form of the classical Sobolev inequality}\label{Trans}

First, we explain the transformation introduced by Ioku \cite{I} for readers convenience and give several remarks. 

We use the polar coordinates: $\re^N \ni x = r \w \,(r \in \re_+, \, \w \in \mathbb{S}^{N-1})$. 
Let $T \in [R, \infty], y \in B_T^N, x \in B_R^N, r= |x|, t =|y|$, and $w \in C_c^1 (B_T)$.
By using the fundamental solution of $p-$Laplacian with Dirichlet boundary condition, 
we consider the following transformation
\begin{align}\label{trans}
u(r \w) = w(t \w), \,\,\text{where}\,\, r^{-\frac{N-p}{p-1} } - R^{-\frac{N-p}{p-1} } = t^{-\frac{N-p}{p-1} } - T^{-\frac{N-p}{p-1} }.
\end{align}
Note that in the case where $T = \infty$ the transformation (\ref{trans}) is founded by Ioku\cite{I}. Then we see that
\begin{align*}
\int_{B_T} | \nabla w |^p \,dy 
&= \int_{\mathbb{S}^{N-1}} \int_0^T \left| \frac{\pd w}{\pd t} \w + \frac{1}{t} \nabla_{\mathbb{S}^{N-1}} w \,\right|^p t^{N-1} \,dt dS_{\w}\\
&= \int_{\mathbb{S}^{N-1}} \int_0^R \left| \frac{\pd u}{\pd r} \w + \frac{1}{t} \frac{dt}{dr} \nabla_{\mathbb{S}^{N-1}} u \,\right|^p \( \frac{dr}{dt} \)^{p-1} t^{N-1} \,dr dS_{\w}.
\end{align*}
Since 
\begin{align*}
\frac{dr}{dt} = \(\, \frac{r}{t} \,\)^{\frac{N-1}{p-1}}, \,\, t \, \frac{dr}{dt} = r^{\frac{N-1}{p-1}} \( r^{-\frac{N-p}{p-1} } - R^{-\frac{N-p}{p-1} } + T^{-\frac{N-p}{p-1} } \),
\end{align*}
we have
\begin{align}\label{nab non-rad}
\int_{B_T} | \nabla w |^p \,dy = \int_{B_R} | L_p u |^p \,dx,
\end{align}
where
\begin{align*}
L_p u = \frac{\pd u}{\pd r} \w + \frac{1}{r} \nabla_{\mathbb{S}^{N-1}} u \left[ 1 -  a \( \frac{r}{R} \)^{\frac{N-p}{p-1} } \right]^{-1}.
\end{align*}
where $a= 1- \( \frac{R}{T} \)^{\frac{N-p}{p-1}} \nearrow 1$ as $T \nearrow \infty$. 
Note that the differential operator $L_p$ is not $\nabla$ for $T > R$ due to the last term. However, if $u$ and $w$ are radial functions, then we can obtain the equality of two $L^p$ norms of $\nabla$ between $B_T$ and $B_R$ as follows:
\begin{align}\label{nab}
\int_{B_T} | \nabla w |^p \,dy = \int_{B_R} | \nabla u |^p \,dx \quad \text{if} \,\, u\,\,\text{and}\,\,w \,\, \text{are radial.}
\end{align}
On the other hand, for the Hardy-Sobolev term, we have
\begin{align}
\label{deno}
\int_{B_T} \frac{| w |^{p^*(s)}}{|y|^s} \,dy &= \int_{B_R} \frac{| u |^{p^*(s)}}{|x|^s\( 1-a \(\frac{|x|}{R}\)^{\frac{N-p}{p-1}}\)^{\beta}} \,dx,
\end{align}
where $\beta = \beta (s)= \frac{(N-1)p - (p-1)s}{N-p}$.
Set 
\begin{align*}
I_{a,{\rm rad}} := \inf_{u \in W_{0, {\rm rad}}^{1,p}(B_R) \setminus \{ 0\} } \frac{\int_{B_R} | \nabla u |^p \,dx}{\( \int_{B_R} \frac{| u |^{p^*(s)}}{|x|^s \( 1-a \,\( \frac{|x|}{R} \) ^{\frac{N-p}{p-1}} \)^{\beta}} \,dx \)^{\frac{p}{p^*(s)}}}.
\end{align*}
From (\ref{nab}) and (\ref{deno}), we observe that the minimization problem $I_{a, {\rm rad}}$ can be reduced the classical Hardy-Sobolev minimization problem $C_{N, p, s}$:
\begin{align*}
C_{N, p, s} 
= \inf_{w \in W_{0, {\rm rad}}^{1,p}(B_T ) \setminus \{ 0\} } \dfrac{\int_{B_T} | \nabla w |^p \,dy}{\( \int_{B_T} \frac{| w |^{p^*(s)}}{|y|^s} \,dy \)^{\frac{p}{p^*(s)}}}.
\end{align*}
It is well-known that 
$C_{N, p, s}$ is independent of the radius $T$ and $C_{N, p, s}$ is attained if and only if $T = \infty$ and $0\le s < p$. Moreover its minimizer is the family of
\begin{align*}
W_\la (t)= \la^{\frac{N-p}{p}} (1+(\la t)^{\frac{p-s}{p-1}})^{-\frac{N-p}{p-s}}\,\, \text{for}\,\,\la \in (0, \infty),
\end{align*}
see e.g. \cite{CRT}. Therefore we can obtain the following results for $I_{a, {\rm rad}}$ based on the transformation (\ref{trans}).

\begin{prop}\label{I_a rad}(\cite{I})
$I_{a, {\rm rad}}$ is independent of $a \in [0,1]$ and $I_{a, {\rm rad}} = C_{N, p, s}$.  And $I_{a, {\rm rad}}$ is attained if and only if $a=1$ and $0\le s < p$. Moreover, the minimizer of $I_{1, {\rm rad}}$ is the family of
\begin{align*}
U^{\la} (r) = \la^{\frac{N-p}{p}} \left[  1+ (\la r)^{\frac{p-s}{p-1}} \left\{  1- \( \frac{r}{R}\)^{\frac{N-p}{p-1}} \right\}^{-\frac{p-s}{N-p}} \right]^{-\frac{N-p}{p-s}}\,\, \text{for}\,\,\la \in (0, \infty).
\end{align*}
\end{prop}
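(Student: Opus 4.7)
The plan is to exploit the transformation (\ref{trans}), which for fixed $R$ sets up a smooth, bijective correspondence between radial $w\in W^{1,p}_{0,\text{rad}}(B_T)$ and radial $u\in W^{1,p}_{0,\text{rad}}(B_R)$ under the parameter identification $a=1-(R/T)^{(N-p)/(p-1)}$; thus $a\in[0,1)$ corresponds to $T\in[R,\infty)$ and $a=1$ to $T=\infty$. On radial functions $L_p u$ reduces to $\nabla u$ (since $\nabla_{\mathbb{S}^{N-1}}u\equiv 0$), so (\ref{nab non-rad}) collapses to the simplified identity (\ref{nab}), and together with (\ref{deno}) this says that both the numerator and the denominator of the Rayleigh quotient defining $I_{a,\text{rad}}$ coincide with those of the classical Hardy-Sobolev quotient on $B_T$. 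Taking infima over radial functions therefore gives $I_{a,\text{rad}}=C_{N,p,s}$, which is independent of $T$ (hence of $a$) by the standard dilation invariance $w_\mu(y)=w(y/\mu)$ of the classical problem.

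For the attainment assertion I would invoke the classical fact quoted just after (\ref{deno}): $C_{N,p,s}$ on $B_T$ is attained if and only if $T=\infty$ and $0\le s<p$, with the radial minimizing family $W_\lambda(t)=\lambda^{(N-p)/p}(1+(\lambda t)^{(p-s)/(p-1)})^{-(N-p)/(p-s)}$. Since (\ref{trans}) is a bijection that sends radial minimizers to radial minimizers, attainment transfers: $I_{a,\text{rad}}$ is attained exactly when $a=1$ and $0\le s<p$, and not otherwise.

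To produce the explicit form of $U^\lambda$ in the case $a=1$ (equivalently $T=\infty$), I would substitute $W_\lambda$ into (\ref{trans}), which now reads $t^{-(N-p)/(p-1)}=r^{-(N-p)/(p-1)}-R^{-(N-p)/(p-1)}$. Factoring $r^{-(N-p)/(p-1)}$ out of the right-hand side gives the one-line identity
\begin{equation*}
t^{(p-s)/(p-1)} \;=\; r^{(p-s)/(p-1)}\left(1-(r/R)^{(N-p)/(p-1)}\right)^{-(p-s)/(N-p)},
\end{equation*}
and inserting this into $W_\lambda(t)$ reproduces the stated form of $U^\lambda(r)$.

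The main obstacle is merely bookkeeping: one must check that (\ref{trans}) is a smooth radial diffeomorphism of the open balls that extends to a trace-preserving bijection of $W^{1,p}_{0,\text{rad}}(B_T)$ onto $W^{1,p}_{0,\text{rad}}(B_R)$ (the boundary $t=T$ corresponds to $r=R$, so $w(T)=0$ forces $u(R)=0$), and that the operator $L_p$ collapses to $\nabla$ on radial functions. Both points follow immediately from the explicit change of variables and carry no genuine analytic difficulty, so the whole content of the proposition reduces to the identification $I_{a,\text{rad}}=C_{N,p,s}$ on the corresponding ball $B_{T(a)}$.
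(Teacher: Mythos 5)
Your proposal is correct and follows essentially the same route as the paper: the paper establishes Proposition \ref{I_a rad} precisely by combining the transformation (\ref{trans}) with the identities (\ref{nab}) and (\ref{deno}) for radial functions, quoting the classical facts about $C_{N,p,s}$ on $B_T$ (independence of $T$, attainment iff $T=\infty$ and $0\le s<p$, minimizing family $W_\la$), and your substitution $t^{\frac{p-s}{p-1}}=r^{\frac{p-s}{p-1}}\bigl(1-(r/R)^{\frac{N-p}{p-1}}\bigr)^{-\frac{p-s}{N-p}}$ correctly recovers $U^\la$ from $W_\la$.
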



\begin{remark}[Scale invariance]
It is well-known that thanks to the zero extension, the classical inequality (\ref{HS}) has the scale invariance under the usual scaling for $\la \in [1, \infty)$.
\begin{align}\label{usual scale}
w_\la (t \w) = 
\begin{cases}
\la^{\frac{N-p}{p}} w(\la t \w) \,\, &\text{for}\,\, t \in [0, \frac{T}{\la}], \\
0 &\text{for} \,\, t \in (\frac{T}{\la}, T].
\end{cases}
\end{align}
Note that in the case where $T = \infty$, we can consider any $\la \in (0, \infty)$. 
By the transformation (\ref{trans}), the usual scaling (\ref{usual scale}) is changed its form to the following scaling for $\la \in [1, \infty)$.
\begin{align}\label{new scale}
&u^\la (r \w) = 
\begin{cases}
\la^{\frac{N-p}{p}} u(\tilde{r} \w) \,\, &\text{for}\,\, t \in [0, \tilde{R}], \\
0 &\text{for} \,\, t \in (\tilde{R}, R],
\end{cases} \\
&\text{where}\,\,\tilde{r} =  (\la r) \left[ 1 + a \( \la^{\frac{N-p}{p-1}} -1 \)  \( \frac{r}{R} \)^{\frac{N-p}{p-1}}  \right]^{\,-\frac{p-1}{N-p}},\, \tilde{R} = R \( \la^{\frac{N-p}{p-1}} (1-a) + a \)^{-\frac{p-1}{N-p}}. \notag
\end{align}
Respectively, in the case where $a=1$, we can consider any $\la \in (0, \infty)$.   
Obviously from (\ref{nab non-rad}) and (\ref{deno}), we see that the improved Hardy-Sobolev inequality:
\begin{align*}
C_{N, p, s} \( \int_{B_R} \frac{| u |^{p^*(s)}}{|x|^s\( 1-a \(\frac{|x|}{R}\)^{\frac{N-p}{p-1}}\)^{\beta}} \,dx \)^{\frac{p}{p^*(s)}} \le \int_{B_R} | L_p u |^p \,dx
\end{align*}
with the differential operator $L_p$ is invariant under the scaling (\ref{new scale}). 
The scaling in \cite{I} looks different from the scaling (\ref{new scale}). However, taking $\la \mapsto \la^{-\frac{p-1}{N-p}}$, we observe that these scalings are same essentially. 
Note that $\| \nabla u \|_{L^p(B_R)}$ is not invariant under the scaling (\ref{new scale}) for non-radial functions. 
Therefore, in \S \ref{Proof}, we investigate our minimization problem $I_a$ without the transformation (\ref{trans}).
\end{remark}

\begin{remark}
Actually, the original transformation is given in Theorem 18. in \cite{F} for $u \in W_{0, {\rm rad}}^{1,2}(B_1)$, where $B_1, \Omega \subset \re^2$ as follows.
\begin{align}\label{Ftrans}
w(y)= u(x),\,\, {\rm where}\,\, G_{\Omega, z} (y) = G_{B_1, 0}(x) = -\frac{1}{2\pi} \log |x|
\end{align}
and $G_{\Omega, z}(y)$ is the Green function in a domain $\Omega$, which has a singularity at $z \in \Omega$. 
Therefore, we can observe that the transformation (\ref{trans}) is (\ref{Ftrans}) in the case where $W_{0, {\rm rad}}^{1,p}(B_1), p < N$ and $z=0, B_1 \subset \re^N = \Omega$.  
\end{remark}


In addition to (\ref{Ftrans}) and (\ref{trans}), 
there are various transformations by which the classical Hardy-Sobolev type inequality equivalently connects to another inequality for radial functions (ref. \cite{Z}, \cite{HK}, \cite{II}, \cite{ST}, \cite{I}, \cite{S}). 
Next, we explain them comprehensively. 
An unified viewpoint is {\it to connect two typical functions on each world (e.g. fundamental solution of $p-$Laplacian, virtual minimizer of the Hardy type inequality)}. 
In \cite{Z}, \cite{HK} and \cite{II}, the following transformation (\ref{HK trans}) is considered by using two fundamental solutions of $p-$Laplacian and weighted $p-$Laplacian: div$( |x|^{p-N} |\nabla u |^{p-2} \nabla u)$.
\begin{align}\label{HK trans}
u(r) = w(t), \,\,\text{where}\,\, t^{-\frac{N-p}{p-1} } = \log \frac{R}{r}.
\end{align}
Then they obtain the equality of two norms between the subcritical Sobolev space $W_0^{1, p} (\re^N) \,(p < N)$ and the weighted critical Sobolev space $W_0^{1, p} (B_R^N ; |x|^{p-N} \,dx)$ as follows.
\begin{align*}
\int_{\re^N} | \nabla w |^p \,dy = \int_{B^N_R} |x|^{p-N} | \nabla u |^p \,dx, \,\,
\int_{\re^N} \frac{| w |^{q}}{|y|^s} \,dy 
= \frac{p-1}{N-p} \int_{B_R^N} \frac{| u |^{q}}{|x|^N \( \log \frac{R}{|x|}  \)^{\beta (s)}} \,dx.
\end{align*}
On the other hand, in \cite{ST} and \cite{S}, they consider the following transformation (\ref{ST trans}) by using two fundamental solutions of $p \,(=N) -$Laplacian and $N-$Laplacian as follows.
\begin{align}\label{ST trans}
u(r) = w(t), \,\,\text{where}\,\, t^{-\frac{m-N}{N-1} } = \log \frac{R}{r} \,\, \text{is equivalent to} \, t^{-\frac{m-N}{N} } = \( \log \frac{R}{r} \)^{\frac{N-1}{N}} .
\end{align}
A different point of the transformation (\ref{ST trans}) from these transformations (\ref{Ftrans}), (\ref{trans}), (\ref{HK trans}) is to consider {\it the difference of dimensions on each world}. Thanks to the difference of dimensions, we obtain the equality of two norms between the critical Sobolev space $W_0^{1, N} (B_R^N)$ and the higher dimensional subcritical Sobolev space $W_0^{1, N} (\re^m) \,(N < m)$ as follows.
\begin{align*}
\int_{\re^m} | \nabla w |^N \,dy = \int_{B^N_R} | \nabla u |^N \,dx,\,\,
\int_{\re^m} \frac{| w |^{q}}{|y|^{\alpha}} \,dy 
=\frac{\w_{m-1} (N-1)}{\w_{N-1} (m-N)} \int_{B_R^N} \frac{| u |^{q}}{|x|^N \( \log \frac{R}{|x|}  \)^{\gamma(\alpha )}} \,dx,
\end{align*}
where $\gamma(\alpha) = \frac{(m-1)N - (N-1) \alpha}{m-N}$. 
This gives an equivalence between the critical Hardy inequality and a part of the higher dimensional subcritical Hardy inequality (ref. \cite{ST}). 
Besides, this also gives an relationship between 
the embedding of the subcritical Sobolev space into the Lorentz spaces for $q>p$:
\begin{align*}
W_0^{1,p} (B^N_R) \hookrightarrow L^{p^*, p} \hookrightarrow L^{p^*, q} \hookrightarrow L^{p^*, \infty} 
\end{align*}
and the embedding of the critical Sobolev space into the Lorentz-Zygmund spaces for $q>N$:
\begin{align*}
W_0^{1,N} (B^N_R) \hookrightarrow L^{\infty, N}(\log L)^{-1} \hookrightarrow L^{\infty, q}(\log L)^{-1+ \frac{1}{N} - \frac{1}{q}} \hookrightarrow L^{\infty, \infty}(\log L)^{-1+\frac{1}{N}} = {\rm Exp L}^{\frac{N}{N-1}}.
\end{align*}
Since almost all transformations are applicable only for radial functions, 
we can expect the different phenomena from classical results for any functions, for example the existence and the non-existence of a minimizer. 
In fact, the author in \cite{S} shows the existence of a non-radial minimizer of the  inequality associated with the embedding: $W_0^{1,N} (B^N_R) \hookrightarrow L^{\infty, q}(\log L)^{-1+ \frac{1}{N} - \frac{1}{q}}$. 
In this paper, we study an analogue of this work \cite{S}.

Finally, as an application of this unified viewpoint for the transformations, we derive some {\it infinite dimensional form} of the classical Sobolev inequality in a different way from \cite{BP} which is a study of the logarithmic Sobolev inequality. 
In order to consider a limit as the dimension $m \to \infty$, we reduce the dimension $m$ to $N$ by using the following transformation (\ref{trans dim}) which connects two norms between the Sobolev space $W_0^{1,p}(\re^m)$ and the lower dimensional Sobolev space $W_0^{1,p}(\re^N)$, where $p < N < m$.
\begin{align}\label{trans dim}
u(r) = w(t), \,\,\text{where}\,\, t^{-\frac{N-p}{p-1} } = r^{-\frac{m-p}{p-1} }.
\end{align}
Then we can see that
\begin{align*}
\int_{\re^m} | \nabla u |^p \,dx 
&= \frac{\w_{m-1}}{\w_{N-1}} \( \frac{m-p}{N-p} \)^{p-1} \int_{\re^N} | \nabla w |^p \,dy,\\
\int_{\re^m} | u |^{\frac{mp}{m-p}} \,dx
&= \frac{\w_{m-1}}{\w_{N-1}} \,\frac{N-p}{m-p} \int_{\re^N} \frac{| w |^{\frac{mp}{m-p}}}{|y|^{\frac{m-N}{m-p}p}} \,dy.
\end{align*}
Therefore the Sobolev inequality (\ref{HS}) for radial functions $u \in W_0^{1,p}(\re^m)$:
\begin{equation*}
C_{m, p, 0} \( \int_{\re^m} |u|^{\frac{mp}{m-p}} dx \)^{\frac{m-p}{m}} \le \int_{\re^m} | \nabla u |^p dx
\end{equation*}
is equivalent to the following inequality for radial functions $w \in W_0^{1,p}(\re^N)$.
\begin{align}\label{S another}
C_{m, p, 0} \( \frac{\w_{N-1}}{\w_{m-1}} \)^{\frac{p}{m}} \( \frac{N-p}{m-p} \)^{p-\frac{p}{m}} \( \int_{\re^N} \frac{| w |^{\frac{mp}{m-p}}}{|x|^{\frac{m-N}{m-p}p}} \,dy \)^{\frac{m-p}{m}} \le \int_{\re^N} | \nabla w |^p \, dy.
\end{align}
Since
\begin{align*}
&C_{m,p,0}=\pi^{\frac{p}{2}} m \( \frac{m-p}{p-1} \)^{p-1} \( \frac{\Gamma (\frac{m}{p}) \Gamma (m+ 1 -\frac{m}{p})}{\Gamma (m)  \Gamma(1+\frac{m}{2})} \)^{\frac{p}{m}}\,\,(\text{Sobolev's best constant}), \\
&\w_{N-1} = \frac{N \pi^{\frac{N}{2}}}{\Gamma \( 1+ \frac{N}{2} \)}, \,\, 
\Gamma (t) = \sqrt{2\pi} \,t^{\,t-\frac{1}{2}} \,e^{-t} + o(1) \,\, \text{as}\,\, t \to \infty \,\,(\text{Stirling's formula}),
\end{align*}
we have
\begin{align*}
&C_{m, p, 0} \( \frac{\w_{N-1}}{\w_{m-1}} \)^{\frac{p}{m}} \( \frac{N-p}{m-p} \)^{p-\frac{p}{m}} \\
&= \frac{m}{m-p} \,\frac{(N-p)^p}{(p-1)^{p-1}} \( \frac{\w_{N-1} \,(m-p)}{N-p} \)^{\frac{p}{m}}  \( \frac{\Gamma (\frac{m}{p}) \Gamma \( \frac{p-1}{p} m +1 \)}{\Gamma (m+1) } \)^{\frac{p}{m}} \\
&= \frac{(N-p)^p}{(p-1)^{p-1}} \( \frac{(\frac{m}{p})^{\frac{m}{p} -\frac{1}{2}} e^{-\frac{m}{p}} \( \frac{p-1}{p} m +1\)^{\frac{p-1}{p} m +\frac{1}{2}} e^{-\frac{p-1}{p} m -1}}{(m+1)^{m+\frac{1}{2}} e^{-(m+1)} } \)^{\frac{p}{m}} +o(1)\\
&= \( \frac{N-p}{p} \)^p+o(1) \,\,(m \to \infty).
\end{align*}
Hence we can obtain the limit of the left-hand side of (\ref{S another}) as $m \to \infty$ as follows.
\begin{align*}
C_{m, p, 0} \( \frac{\w_{N-1}}{\w_{m-1}} \)^{\frac{p}{m}} \( \frac{N-p}{m-p} \)^{p-\frac{p}{m}} \( \int_{\re^N} \frac{| w |^{\frac{mp}{m-p}}}{|y|^{\frac{m-N}{m-p}p}} \,dy \)^{\frac{m-p}{m}} \to
\( \dfrac{N-p}{p} \)^p \int_{\re^N} \dfrac{|w|^p}{|y|^p} dy.
\end{align*}

From above calculations, we can observe an interesting new aspect of the classical Hardy inequality, that is {\it an infinite dimensional form} of the classical Sobolev inequality. 
And we also see that under the transformation (\ref{trans dim}), the Hardy inequality on $W_0^{1,p}(\re^m)$ is equivalent to it on $W_0^{1,p}(\re^N)$, that is, the Hardy inequality is independent of the dimension in this sense.

%
%

\section{Proof of Theorem \ref{Main}: the existence and the non-existence of the minimizer}\label{Proof}

In this section, we prepare several Lemmas and Propositions and show Theorem \ref{Main}. 

Note that we can apply the rearrangement technique to our minimization problem $I_a$ for $a \in [0, \frac{s(p-1)}{p(N-1)}]$. More precisely, since the potential function $V_a(x)$ is radially decreasing on $B_R$ for $a \in [0, \frac{s(p-1)}{p(N-1)}]$, the P\'olya-Szeg\"o inequality and the Hardy-Littlewood inequality imply that
\begin{align*}
\dfrac{\int_{B_R} | \nabla u |^p \,dx}{\( \int_{B_R} |u|^{p^*(s)} V_a (x)\, dx \)^{\frac{p}{p^*(s)}}} \ge \dfrac{\int_{B_R} | \nabla u^{\#} |^p \,dx}{\( \int_{B_R} |u^{\#}|^{p^*(s)} V_a(x)\, dx \)^{\frac{p}{p^*(s)}}} \ge I_{a, {\rm rad}}
\end{align*}
for any $u \in W_0^{1,p}(B_R)$ and $a \in [0, \frac{s(p-1)}{p(N-1)}]$. Therefore we have
\begin{align}\label{rearrange}
I_a = I_{a, {\rm rad}} =C_{N, p, s} \quad \text{for any} \,\,a \in \left[0, \frac{s(p-1)}{p(N-1)} \right].
\end{align}
However, we can not apply the rearrangement technique to $I_a$ for $a \in ( \frac{s(p-1)}{p(N-1)}, 1]$. Therefore it seems difficult to reduce the radial setting in general.  

First, instead of rearrangement, we use the following lemma by which we can reduce the radial setting when $s=p$.

\begin{lemma}\label{radial nomi}
Let $1 < q <\infty$, $f=f(x)$ be a radial function on $B_R$. 
If there exists $C>0$ such that for any radial functions $u \in C_c^1(B_R)$ the inequality:
\begin{equation}\label{rad}
C \int_{B_R} |u|^q f(x)\, dx \le \int_{B_R} | \nabla u |^q\, dx 
\end{equation}
holds, then for any functions $w \in C_c^1(B_R)$ the inequality :
\begin{equation}\label{non-rad}
C \int_{B_R} |w|^q f(x)\, dx \le \int_{B_R} \left| \nabla w \cdot \frac{x}{|x|} \right|^q\, dx 
\end{equation}
holds.
\end{lemma}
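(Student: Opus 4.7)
The plan is to reduce (\ref{non-rad}) to the hypothesized one-dimensional radial inequality (\ref{rad}) direction-by-direction using polar coordinates. Writing $x = r\omega$ with $r \in [0,R]$ and $\omega \in \mathbb{S}^{N-1}$, I define for each fixed direction $\omega$ the slice
\[ g_\omega(r) := w(r\omega), \qquad r \in [0,R], \]
which is $C^1$ on $[0,R]$ and vanishes near $r = R$ because $w \in C_c^1(B_R)$. The identity that links the radial derivative with the full gradient of $w$ is
\[ g_\omega'(r) = \nabla w(r\omega) \cdot \omega = \nabla w(x) \cdot \tfrac{x}{|x|}\Big|_{x = r\omega}, \]
which will produce exactly the expression $|\nabla w \cdot x/|x||$ appearing on the right-hand side of (\ref{non-rad}).

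The next step is to extend $g_\omega$ to a radial function $\widetilde g_\omega(x) := g_\omega(|x|)$ on $B_R$ and apply (\ref{rad}) to it. The subtlety is that $\widetilde g_\omega$ is generally not of class $C^1$ at the origin (it fails whenever $\nabla w(0)\cdot\omega \ne 0$), so it need not be a legitimate radial $C_c^1(B_R)$ test function. I would therefore first prove the lemma under the extra assumption that $w$ vanishes on a neighborhood of $0$; then $\widetilde g_\omega \in C_c^1(B_R)$ and (\ref{rad}) yields, after passing to polar coordinates,
\[ C \int_0^R |g_\omega(r)|^q f(r)\, r^{N-1}\, dr \le \int_0^R |g_\omega'(r)|^q r^{N-1}\, dr. \]
The general case would follow by replacing $w$ with $w_\epsilon := \chi_\epsilon w$, where $\chi_\epsilon$ is a smooth radial cutoff vanishing on $B_\epsilon$ and equal to $1$ outside $B_{2\epsilon}$, applying the previous step to $w_\epsilon$, and sending $\epsilon \to 0$ by dominated convergence; the boundedness of $w$, $\nabla w$ together with $|B_{2\epsilon}| \to 0$ controls the error terms on both sides.

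Finally, I would integrate the displayed one-dimensional inequality over $\omega \in \mathbb{S}^{N-1}$ against the surface measure and convert back via Fubini, using the fact that $f$ is radial so that $f(x) = f(|x|)$. The left-hand side becomes $C\int_{B_R} |w|^q f(x)\, dx$ and the right-hand side becomes $\int_{B_R}|\nabla w \cdot x/|x||^q\, dx$, which is exactly (\ref{non-rad}). The main obstacle, as flagged above, is the loss of $C^1$ regularity of the radial extension $\widetilde g_\omega$ at the origin; the cutoff-and-pass-to-the-limit procedure is the cleanest bypass and avoids invoking any density theorem beyond the hypotheses of the lemma.
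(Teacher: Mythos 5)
Your route is genuinely different from the paper's. The paper applies the hypothesis \eqref{rad} exactly once, to the single radial function
\begin{align*}
W(r)=\left(\omega_{N-1}^{-1}\int_{\mathbb{S}^{N-1}}|w(r\omega)|^{q}\,dS_{\omega}\right)^{1/q},
\end{align*}
the $L^q$ spherical average of $w$; the whole content there is the H\"older estimate $|W'(r)|\le \omega_{N-1}^{-1/q}\bigl(\int_{\mathbb{S}^{N-1}}|\partial_r w(r\omega)|^{q}\,dS_{\omega}\bigr)^{1/q}$, which gives $\int_{B_R}|\nabla W|^{q}\,dx\le\int_{B_R}|\nabla w\cdot x/|x||^{q}\,dx$ while $\int_{B_R}|W|^{q}f\,dx=\int_{B_R}|w|^{q}f\,dx$ because $f$ is radial. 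You instead apply \eqref{rad} to every directional slice $\widetilde g_\omega$ and then integrate over $\omega$ by Fubini. Your decomposition is correct, the Fubini bookkeeping is exactly right, and the argument is arguably more elementary since it needs no differentiation of an $L^q$ average. Both arguments produce radial test functions that are not literally $C^1$ (the paper's $W$ also only Lipschitz where $w$ vanishes on a whole sphere), and you are right to flag the origin as the obstruction for your slices.

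The one step that does not close as justified is the passage to the limit in the cutoff. With $|\nabla\chi_\varepsilon|\le C\varepsilon^{-1}$, the extra term on the right-hand side is bounded by $C\,\|w\|_{L^\infty(B_{2\varepsilon})}^{q}\,\varepsilon^{-q}\,|B_{2\varepsilon}|\sim \varepsilon^{N-q}$, so ``boundedness of $w$, $\nabla w$ together with $|B_{2\varepsilon}|\to0$'' controls it only when $q<N$, or when $w(0)=0$ (then $\|w\|_{L^\infty(B_{2\varepsilon})}\lesssim\varepsilon$ rescues every $q$). For $q\ge N$ and $w(0)\ne0$ the error does not vanish, whereas the lemma is stated for all $1<q<\infty$. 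Since the paper only invokes the lemma with $q=p<N$, your proof covers the case that matters, but to prove the statement as written you should replace the cutoff by a density argument: $\widetilde g_\omega$ is compactly supported and Lipschitz, so its mollification with a radial kernel is an admissible radial $C_c^1$ test function whose gradients are uniformly bounded and converge a.e.; applying \eqref{rad} to it and letting the mollification parameter tend to $0$ (dominated convergence on both sides, noting that \eqref{rad} applied to a radial bump dominating $|w|$ forces $f$ to be integrable on $\operatorname{supp}w$) extends \eqref{rad} to all compactly supported Lipschitz radial functions, after which your slicing argument goes through with no cutoff at all.
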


\begin{proof}
For any $w \in C_c^1(B_R)$, define a radial function $W$ as follows.
\begin{align*}
W(r) = \( \w_{N-1}^{-1} \int_{\mathbb{S}^{N-1}} | w(r\w ) |^q\,dS_{\w} \)^{\frac{1}{q}} \quad (0 \le r \le R).
\end{align*}
Then we have
\begin{align*}
|W \,'(r) | &= \w_{N-1}^{-\frac{1}{q}} \( \int_{\mathbb{S}^{N-1}} | w(r\w ) |^q\,dS_{\w} \)^{\frac{1}{q}-1} \int_{\mathbb{S}^{N-1}} | w |^{q-1} \left| \frac{\pd w}{\pd r} \right| \,dS_{\w} \\
&\le \w_{N-1}^{-\frac{1}{q}} \( \int_{\mathbb{S}^{N-1}} \left| \frac{\pd w}{\pd r} (r\w ) \right|^q \,dS_{\w} \)^{\frac{1}{q}}.
\end{align*}
Therefore we have
\begin{align}\label{right}
\int_{B_R} | \nabla W |^q\, dx &\le \int_{B_R} \left| \nabla w \cdot \frac{x}{|x|} \right|^q\, dx, \\
\label{left}
\int_{B_R} |W|^q f(x)\, dx &= \int_{B_R} |w|^q f(x)\, dx.
\end{align}
From (\ref{rad}) for $W$, (\ref{right}), and (\ref{left}), we obtain (\ref{non-rad}) for any $w$.
\end{proof}


Second, we give a necessary and sufficient condition of the positivity of $I_a$ for $a \in [0, 1]$. As we see Proposition \ref{I_a rad}, $I_{a, {\rm rad}} = C_{N, p, s} >0$ for any $s \in [0,p]$ and any $a \in [0,1]$. However, $I_a$ is not so due to the boundary singularity. This is also mentioned by \cite{I}. For readers convenience, we give a sketch of the proof. 

\begin{prop}\label{Posi I_a} \,\,
$I_a = 0 \iff a =1$ and $0\le s < p$.
\end{prop}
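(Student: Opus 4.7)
The plan is to prove the two implications separately.

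For the direction ``$\Leftarrow$'' (positivity when the exclusion fails), I would split into two cases. If $a \in [0, 1)$, then on $B_R$ one has $1 - a(|x|/R)^{(N-p)/(p-1)} \ge 1-a > 0$, hence $V_a(x) \le (1-a)^{-\beta}|x|^{-s}$, and the classical Hardy-Sobolev inequality (\ref{HS}) gives $I_a \ge (1-a)^{\beta p/p^*(s)} C_{N,p,s} > 0$. If instead $a=1$ and $s=p$, the potential $V_1$ is radial, and Proposition \ref{I_a rad} yields $\int_{B_R}|\nabla u|^p\,dx \ge C_{N,p,p}\int_{B_R} V_1 |u|^p\,dx$ for every radial $u$. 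Applying Lemma \ref{radial nomi} with $q=p$ and $f = V_1$, and combining with the pointwise bound $|\nabla u \cdot (x/|x|)| \le |\nabla u|$, this inequality extends to every $u \in C_c^1(B_R)$, and hence $I_1 \ge C_{N,p,p} = ((N-p)/p)^p > 0$.

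For the direction ``$\Rightarrow$'' (that $a=1$ with $0 \le s<p$ forces $I_1 = 0$), I would construct a boundary-concentration sequence. Fix $x_0 = (R,0,\ldots,0) \in \pd B_R$ and pick a nonzero $\phi \in C_c^\infty(\re^N)$ with support in $\{y:\, -1 \le y_1 \le -1/2,\ |y| \le 1\}$. Set $u_\ep(x) := \phi((x-x_0)/\ep)$; for $\ep$ small, $u_\ep \in C_c^\infty(B_R)$. A Taylor expansion gives $|x| = R + \ep y_1 + O(\ep^2)$ on the support and consequently
\[
1 - \left(\frac{|x|}{R}\right)^{(N-p)/(p-1)} \sim -\frac{N-p}{(p-1)R}\,\ep y_1, \qquad V_1(x) \sim C_1 \ep^{-\beta}(-y_1)^{-\beta},
\]
uniformly on $\operatorname{supp} u_\ep$ as $\ep \to 0$, with $C_1 > 0$ depending only on $R, N, p, s$. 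The change of variable $x = x_0 + \ep y$ then produces
\[
\int_{B_R}|\nabla u_\ep|^p\,dx = \ep^{N-p}\!\int|\nabla \phi|^p\,dy, \quad \int_{B_R} V_1|u_\ep|^{p^*(s)}\,dx \sim C_1 \ep^{N-\beta}\!\int (-y_1)^{-\beta}|\phi|^{p^*(s)}\,dy,
\]
with both integrals on the right finite and strictly positive. Consequently the Rayleigh quotient scales as $\ep^{\alpha}$ with $\alpha = N - p - (N-\beta)p/p^*(s)$; using the identity $p(\beta - 1) = (p-1)\,p^*(s)$, which is immediate from the definitions of $\beta$ and $p^*(s)$, this simplifies to $\alpha = (N-1)(p-s)/(N-s) > 0$ whenever $s < p$, so the quotient tends to $0$ and $I_1 = 0$.

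The main obstacle is the boundary-concentration step: since $\beta > p > 1$, it is essential to keep $\operatorname{supp}\phi$ bounded away from the tangent hyperplane $\{y_1 = 0\}$, for otherwise $(-y_1)^{-\beta}|\phi|^{p^*(s)}$ would fail to be integrable. One also has to verify that the leading-order Taylor expansion of $1 - (|x|/R)^{(N-p)/(p-1)}$ controls $V_1$ uniformly on the shrinking support, which is where the $O(\ep^2)$ remainder must be absorbed. The algebraic identity $p(\beta-1) = (p-1)p^*(s)$ is the decisive ingredient: it delivers a strictly positive exponent precisely when $s < p$ and degenerates to scale invariance at the borderline $s = p$, consistent with the bound $I_1 \ge C_{N,p,p}$ shown above.
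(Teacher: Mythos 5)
Your proposal is correct and follows essentially the same route as the paper: positivity for $a<1$ via the pointwise bound $V_a\le (1-a)^{-\beta}|x|^{-s}$ together with \eqref{HS}, positivity for $a=1$, $s=p$ via Proposition \ref{I_a rad} combined with Lemma \ref{radial nomi}, and vanishing for $a=1$, $0\le s<p$ by a boundary-concentration sequence at scale $\ep$ whose Rayleigh quotient behaves like $\ep^{\frac{N-1}{N-s}(p-s)}\to 0$. The only cosmetic difference is that the paper uses an explicit piecewise-linear bump centered at distance $2\ep$ from $\pd B_R$, whereas you use a general $\phi$ supported away from the tangent hyperplane; the exponent you obtain matches the paper's.
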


\begin{proof}
Let $a=1$. In the similar way to \cite{S}, set $x_{\ep} = (R - 2\ep ) \frac{y}{R}$ for $y \in \pd B_R$ and for small $\ep >0$. 
Then we define $u_\ep$ as follows:
\begin{align*}
u_{\ep}(x) =
	   \begin{cases}
		v\( \frac{|x-x_{\ep}|}{\ep} \) \,\,\,&\text{if} \,\,\, x \in B_{\ep}(x_{\ep}), \\
		0  &\text{if} \,\,\, x \in B_R \setminus B_{\ep}(x_{\ep}),
	   \end{cases}
\,\, \text{where}\,\, v(t)=
	   \begin{cases}
		1 \,\,\,&\text{if} \,\,\,0\le t \le \frac{1}{2},  \\
		2(1-t)  &\text{if} \,\,\, \frac{1}{2} < t \le 1.
	   \end{cases}
\end{align*}
Then we have
\begin{align*}
\int_{B_R} | \nabla u_{\ep} (x)|^p \,dx &= \ep^{N-p} \int_{B_1} |\nabla v (|z|)|^p \,dz = C \ep^{N-p}, \\
\int_{B_R} \frac{|u_{\ep}(x)|^{p^*(s)}}{|x|^s \( 1- \,\( \frac{|x|}{R} \)^{\frac{N-p}{p-1}} \)^\beta} \,dx 
&\ge C \int_{B_\ep (x_\ep )} \frac{|u_{\ep}(x)|^{p^*(s)}}{(R-|x|)^{\beta}} dx \ge  \frac{C}{(3\ep )^{\beta}} \int_{B_{\frac{\ep}{2}} (x_\ep )} dx =C \, \ep^{N-\beta}.
\end{align*}
Hence we see that 
\begin{align*}
I_1 \le C \ep^{N-p- (N-\beta) \frac{p}{p^*(s)}} = C \ep^{\frac{N-1}{N-s} (p-s)} \to 0\,\,\text{as}\,\, \ep \to 0\,\,\text{if}\,\,  0\le s < p.
\end{align*}
Therefore $I_1 = 0$ if $a=1$ and $0\le s < p$. 
Conversely, we can easily show that $I_a > 0$ except for that case.
Indeed, if $s=p$, then $I_a = I_{a, {\rm rad}} > 0$ for any $a \in [0,1]$ from Proposition \ref{I_a rad} and Lemma \ref{radial nomi}. And also, if $a < 1$, then there is no boundary singularity. Thus $I_a > 0$ for any $a \in [0, 1)$. Therefore, we obtain the necessary and sufficient condition of the positivity of $I_a$. 
\end{proof}


Third, we show that $I_a$ is monotone decreasing and continuous with respect to $a \in [0,1]$. The potential function $V_a(x)$ is continuously monotone-increasing with respect to $a \in [0, 1)$. Thus it is easy to show the monotone-decreasing property of $I_a$ with respect to $a \in [0,1]$ and the continuity of $I_a$ with respect to $a \in [0, 1)$. Here, we give a proof of the continuity of $I_a$ at $a=1$ only. 

\begin{lemma}\label{conti I_a}
$I_a$ is monotone-decreasing and continuous with respect to $a \in [0, 1]$.
\end{lemma}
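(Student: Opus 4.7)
The plan is to observe that monotonicity and continuity on $[0,1)$ are direct consequences of the pointwise monotonicity of $V_a$ in $a$ together with the uniform domination $V_a(x) \le C_{a_0}|x|^{-s}$ on $B_R$ for $a \in [0, a_0]$ with $a_0 < 1$; this bound combined with the classical Hardy--Sobolev inequality makes the denominator of the Rayleigh quotient depend continuously on $a$ uniformly in $u$ (on the set $\int |\nabla u|^p\,dx = 1$), which transfers to continuity of $I_a$. Since $I_a$ is monotone-decreasing in $a$, the only nontrivial task is to prove $\limsup_{a \to 1^-} I_a \le I_1$.

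I would split at $a = 1$ according to the value of $s$. When $s = p$, Proposition \ref{I_a rad} and Lemma \ref{radial nomi} force $I_a \equiv C_{N,p,p}$ on all of $[0,1]$, so continuity at $a=1$ is immediate. When $0 \le s < p$, Proposition \ref{Posi I_a} gives $I_1 = 0$, and the goal is to exhibit admissible functions along which the Rayleigh quotient tends to $0$ as $a \to 1^-$. The natural candidate is the boundary-concentrating bump $u_\ep$ used in the proof of Proposition \ref{Posi I_a}, but now with the scale coupled to the parameter via $\ep := 1 - a$.

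The numerator is unchanged, $\int_{B_R}|\nabla u_\ep|^p\,dx = C\ep^{N-p}$. For the denominator, the key estimate is that on the support of $u_\ep$ the inclusion $B_\ep(x_\ep) \subset \{R - 3\ep \le |x| \le R - \ep\}$ and a first-order expansion give $1 - (|x|/R)^{(N-p)/(p-1)} \le C\ep/R$, so
\begin{align*}
1 - a(|x|/R)^{(N-p)/(p-1)} \le (1 - a) + C\ep/R \le C'\ep \quad \text{when } \ep = 1 - a.
\end{align*}
This yields $V_a(x) \ge C\ep^{-\beta}$ throughout the support and hence $\int_{B_R}|u_\ep|^{p^*(s)} V_a\,dx \ge C\ep^{N-\beta}$. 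Combining these with the numerator bound and simplifying the exponent via $p^*(s) = p(N-s)/(N-p)$ and $\beta = ((N-1)p - (p-1)s)/(N-p)$, I expect
\begin{align*}
I_a \le C\ep^{(N-1)(p-s)/(N-s)} = C(1 - a)^{(N-1)(p-s)/(N-s)},
\end{align*}
which tends to $0$ as $a \to 1^-$ because $s < p$.

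The main obstacle is the algebraic bookkeeping in the final exponent: one must verify the identity $N - p - (N-\beta)\,p/p^*(s) = (N-1)(p-s)/(N-s)$ so that the power of $\ep$ is strictly positive (otherwise the upper bound would blow up rather than decay as $\ep \to 0$). Once this identity is confirmed, the rest is a direct computation, and it is reassuring that the same rate $(1-a)^{(N-1)(p-s)/(N-s)}$ is implicit in the proof of Proposition \ref{Posi I_a}, so the construction is really dictated by that earlier argument.
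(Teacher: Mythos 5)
Your proof is correct, but you handle the one genuinely delicate point --- continuity at $a=1$ --- by a different route than the paper. The paper does not case-split on $s$ and never uses the value of $I_1$: it takes a minimizing sequence $(u_m)$ for $I_1$ supported in balls $B_{R_m}$ with $R_m \nearrow R$, dilates via $v(y)=\la_m^{-\frac{N-p}{p}}u_m(x)$, $y=\la_m x$, $\la_m=R/R_m$, and observes that the dilated function is admissible for $I_{a_m}$ with $a_m=\la_m^{-\frac{N-p}{p-1}}\nearrow 1$ and unchanged Rayleigh quotient, giving $I_{a_m}\le I_1+o(1)$ and hence, with monotonicity, $\lim_{a\nearrow 1}I_a=I_1$ for every $s\in[0,p]$ at once. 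You instead split into $s=p$ (where $I_a\equiv C_{N,p,p}$ by Proposition \ref{I_a rad} and Lemma \ref{radial nomi}, both available at this point, so there is nothing to prove) and $0\le s<p$ (where $I_1=0$ by Proposition \ref{Posi I_a} and you squeeze $0\le I_a\le C(1-a)^{\frac{(N-1)(p-s)}{N-s}}$ using the boundary bump with $\ep=1-a$). Your exponent identity $N-p-(N-\beta)\frac{p}{p^*(s)}=\frac{(N-1)(p-s)}{N-s}$ does check out --- it is exactly the exponent already appearing in the proof of Proposition \ref{Posi I_a} --- and the bound $1-a\,(|x|/R)^{\frac{N-p}{p-1}}\le(1-a)+C\ep/R$ on the support of $u_\ep$ is correct, so the argument closes. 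What your approach buys is an explicit convergence rate $I_a=O((1-a)^{\frac{(N-1)(p-s)}{N-s}})$ as $a\to1^-$; what the paper's dilation argument buys is uniformity in $s$ and independence from the a priori value of $I_1$, which makes it the more robust template. One detail worth writing out in your $[0,1)$ part: the denominator is not bounded below over the normalized set $\|\nabla u\|_{L^p}=1$, so the clean way to get the uniform estimate is multiplicative --- from $|V_a-V_{a'}|\le C_{a_0}|a-a'|\,|x|^{-s}\le C_{a_0}|a-a'|\,V_a(x)$ for $a,a'\in[0,a_0]$ one obtains that the two quotients differ by a factor $(1+C|a-a'|)^{\pm p/p^*(s)}$ uniformly in $u$, whence $|I_a-I_{a'}|\le C|a-a'|\,I_0$.
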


\begin{proof}
From the definition of $I_1$, we can take $(u_m)_{m=1}^\infty \subset C_c^{\infty}(B_R)$ and $R_m < R$ for any $m$ such that supp $u_m \subset B_{R_m}, R_m \nearrow R$, and
\begin{align*}
\dfrac{\int_{B_{R_m}} | \nabla u_m |^p \,dx}{\( \int_{B_{R_m}} \frac{|u_{m}(x)|^{p^*(s)}}{|x|^s \( 1- \,\( \frac{|x|}{R} \)^{\frac{N-p}{p-1}} \)^\beta}\, dx \)^{\frac{p}{p^*(s)}}} = I_1 + o(1) \quad {\rm as }\,\, m \to \infty.
\end{align*}
Set $\la_m = \frac{R}{R_m} >1$ and $v(y)= \la_m^{-\frac{N-p}{p}} u_m(x)$, where $y=\la_m x$. Then 
\begin{align*}
\dfrac{\int_{B_{R_m}} | \nabla u_m |^p \,dx}{\( \int_{B_{R_m}} \frac{|u_{m}(x)|^{p^*(s)}}{|x|^s \,\( 1- \,\( \frac{|x|}{R} \)^{\frac{N-p}{p-1}} \)^\beta}\, dx \)^{\frac{p}{p^*(s)}}}
= \dfrac{\int_{B_R} | \nabla v |^p \,dx}{\( \int_{B_R} \frac{|v|^{p^*(s)}}{|y|^s \,\( 1- a_m \( \frac{|y|}{R} \)^{\frac{N-p}{p-1}} \)^\beta}\, dy \)^{\frac{p}{p^*(s)}}} \ge I_{a_m},
\end{align*}
where $a_m = \la_m^{-\frac{N-p}{p-1}} \nearrow 1$ as $m \to \infty$. Therefore we have $I_{a_m} \le I_1 + o(1)$. Obviously, we have $I_1 \le I_{a_m}$ from the monotone-decreasing property of $I_a$. Hence we see that $\lim_{a \nearrow 1} I_a =I_1$. 
\end{proof}


Fourth, we show the following result for the Sobolev case where $s=0$, in the similar way to \cite{SSW} for the H\'enon problem.

\begin{prop}\label{Sobo} \,\,
Let $R < \infty$ and $f: B_R \to \re$ be a nonnegative bounded continuous function with $f \nequiv 0$. Then
\begin{align*}
S:= \inf_{u \in W_0^{1,p}(B_R ) \setminus \{ 0\} } \dfrac{\int_{B_R} |\nabla u |^{p} \,dx}{\( \int_{B_R} |u|^{\frac{Np}{N-p}} f(x) \,dx \)^{\frac{N-p}{N}}} = \( \max_{x \in \overline{B_R}} f(x) \)^{-\frac{N-p}{N}} C_{N, p, 0}
\end{align*}
and there is no minimizers of the minimization problem $S$.
\end{prop}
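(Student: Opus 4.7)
The plan is to establish matching upper and lower bounds for $S$ and then preclude attainment by a rigidity argument. Set $M := \max_{\overline{B_R}} f$, which is positive since $f \ge 0$ and $f \not\equiv 0$, and recall that $p/p^* = (N-p)/N$ where $p^* = Np/(N-p)$.

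\textbf{Lower bound.} For any $u \in W_0^{1,p}(B_R) \setminus \{0\}$, extend $u$ by zero to $\re^N$. Using $f \le M$ pointwise and the classical Sobolev inequality on $\re^N$,
\begin{align*}
\int_{B_R} |\nabla u|^p \,dx = \int_{\re^N} |\nabla u|^p \,dx \ge C_{N,p,0} \(\int_{\re^N} |u|^{p^*} dx\)^{p/p^*} \ge C_{N,p,0} \, M^{-p/p^*} \(\int_{B_R} |u|^{p^*} f \,dx\)^{p/p^*},
\end{align*}
so $S \ge M^{-(N-p)/N} C_{N,p,0}$.

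\textbf{Upper bound.} Choose $x_k \in B_R$ with $f(x_k) \to M$; such a sequence exists by continuity of $f$. Let $W_\lambda(y) = \lambda^{(N-p)/p} W_1(\lambda y)$ be the Aubin--Talenti extremizers of $C_{N,p,0}$, and let $\eta \in C_c^\infty(\re^N)$ be a cutoff with $\eta \equiv 1$ near $0$ and support in some fixed small ball. For $\lambda$ large enough that the translated support of $\eta(\cdot - x_k)$ lies in $B_R$, define $u_{k,\lambda}(x) := \eta(x - x_k) W_\lambda(x - x_k) \in W_0^{1,p}(B_R)$. A standard concentration computation yields
\begin{align*}
\int_{B_R} |\nabla u_{k,\lambda}|^p \,dx = \int_{\re^N} |\nabla W_1|^p \,dy + o_\lambda(1), \quad \int_{B_R} |u_{k,\lambda}|^{p^*} \,dx = \int_{\re^N} |W_1|^{p^*} \,dy + o_\lambda(1)
\end{align*}
as $\lambda \to \infty$, while the continuity of $f$ at $x_k$ gives $\int_{B_R} |u_{k,\lambda}|^{p^*} f \,dx = f(x_k) \int_{\re^N} |W_1|^{p^*} \,dy + o_\lambda(1)$. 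Hence the Rayleigh quotient of $u_{k,\lambda}$ tends to $f(x_k)^{-(N-p)/N} C_{N,p,0}$, and then letting $f(x_k) \to M$ yields $S \le M^{-(N-p)/N} C_{N,p,0}$.

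\textbf{Non-attainment.} Suppose $u \in W_0^{1,p}(B_R) \setminus \{0\}$ attains $S$. Tracing equality through the lower-bound chain forces (i) $\int |u|^{p^*} f = M \int |u|^{p^*}$, so $f \equiv M$ on $\{u \ne 0\}$, and (ii) the zero-extension of $u$ to $\re^N$ saturates the classical Sobolev inequality. By the Aubin--Talenti classification of Sobolev extremizers, (ii) forces $u$ (up to sign, scaling, and translation) to equal some $W_\lambda$, which is strictly positive on all of $\re^N$ and has noncompact support. This contradicts $u \in W_0^{1,p}(B_R)$, whose support lies in $\overline{B_R}$.

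The main subtleties are (i) that the maximum of $f$ may be attained only on $\partial B_R$, which is handled by the interior approximation $x_k$ above, and (ii) the invocation of the classification of $p$-Sobolev extremizers in the non-attainment step, which is a classical result; the remaining computations are by-now-standard concentration estimates.
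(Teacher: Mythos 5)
Your proof is correct and follows essentially the same route as the paper: the lower bound from $f \le \max f$ combined with the Sobolev inequality, the upper bound from test functions concentrating at (near-)maximum points of $f$, and non-attainment by reducing to the fact that no compactly supported function attains $C_{N,p,0}$. The only cosmetic differences are that you concentrate at interior points $x_k$ with $f(x_k)\to\max f$, which treats a boundary maximum uniformly where the paper handles that case separately and omits the details, and that you justify the final contradiction via the Aubin--Talenti classification of extremizers rather than simply citing the non-attainability of the Sobolev constant on bounded domains.
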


\begin{proof}
Since we easily obtain $S \ge \( \max_{x \in \overline{B_R}} f(x) \)^{-\frac{N-p}{N}} C_{N, p, 0}$, we shall show $S \le \( \max_{x \in \overline{B_R}} f(x) \)^{-\frac{N-p}{N}} C_{N, p, 0}$. 
Let $z \in \overline{B_R}$ be a maximum point of $f$. For simplicity, we assume that $z \in B_R$. 
For any $\ep > 0$ there exist $T>0$ and $v \in C_c^{\infty}(B_T)$ such that
\begin{align*}
C_{N, p, 0} 
\ge \dfrac{\int_{B_T} |\nabla v |^{p} \,dx}{\( \int_{B_T} |v|^{\frac{Np}{N-p}} \,dx \)^{\frac{N-p}{N}}} - \frac{\ep}{2}. 
\end{align*}
Set $u_\la (x) = \la^{\frac{N-p}{p}} v (\la (x-z))$ for $\la >0$. Then for large $\la >0$ we have
\begin{align*}
C_{N, p, 0} 
\ge \dfrac{\int_{B_{\la^{-1}T} (z)} |\nabla u_\la |^{p} \,dx}{\( \int_{B_{\la^{-1}T} (z)} |u_\la|^{\frac{Np}{N-p}} \,dx \)^{\frac{N-p}{N}}} - \frac{\ep}{2}
&\ge f(z)^{\frac{N-p}{N}} \dfrac{\int_{B_{\la^{-1}T} (z)} |\nabla u_\la |^{p} \,dx}{\( \int_{B_{\la^{-1}T} (z)} |u_\la|^{\frac{Np}{N-p}} f(x) \,dx \)^{\frac{N-p}{N}}} - \ep \\
&\ge f(z)^{\frac{N-p}{N}} S - \ep.
\end{align*}
Since $\ep$ is arbitrary, we obtain $S \le \( \max_{x \in \overline{B_R}} f(x) \)^{-\frac{N-p}{N}} C_{N, p, 0}$. The case where $z \in \pd B_R$ is also showed in the same way. We omit the proof in that case.


On the other hand, the non-attainability of $S$ comes from it of $C_{N, p, 0}$. 
Indeed, if we assume that $v \ge 0$ is a minimizer of $S$, then
\begin{align*}
S= \dfrac{\int_{B_R} |\nabla v |^{p} \,dx}{\( \int_{B_R} |v|^{\frac{Np}{N-p}} f(x) \,dx \)^{\frac{N-p}{N}}} 
&\ge \( \max_{x \in \overline{B_R}} f(x) \)^{-\frac{N-p}{N}} \dfrac{\int_{B_R} |\nabla u |^{p} \,dx}{\( \int_{B_R} |u|^{\frac{Np}{N-p}} \,dx \)^{\frac{N-p}{N}}} \\
&> \( \max_{x \in \overline{B_R}} f(x) \)^{-\frac{N-p}{N}} C_{N, p, 0} = S,
\end{align*}
where the last inequality comes from the non-attainability of $C_{N, p, 0}$. 
This is a contradiction.
\end{proof}


Fifth, we show the concentration level of minimizing sequences of $I_a$ is $I_{a, \text{rad}}$ when $0 < s < p$.

\begin{lemma}\label{concentration level}
Let $0 < s < p$ and $0 \le a<1$. 
If $I_a < I_{a, {\rm rad}} = C_{N, p, s}$, then $I_a$ is attained by a non-radial function.
\end{lemma}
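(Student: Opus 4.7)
The plan is to run a standard concentration-compactness argument on a minimizing sequence and exploit the strict gap $I_a < C_{N,p,s}$ to eliminate the only possible loss of compactness. Fix a minimizing sequence $(u_n) \subset W_0^{1,p}(B_R)$ with $\int_{B_R} V_a(x)\,|u_n|^{p^*(s)}\, dx = 1$ and $\int_{B_R} |\nabla u_n|^p\, dx \to I_a$. After extracting a subsequence, I may assume $u_n \rightharpoonup u$ weakly in $W_0^{1,p}(B_R)$ and a.e., and that $|\nabla u_n|^p\, dx \rightharpoonup \mu$, $V_a |u_n|^{p^*(s)}\, dx \rightharpoonup \nu$ weakly-$*$ as Radon measures with $\mu \ge |\nabla u|^p\, dx$ and $\nu(B_R) = 1$.

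First I would localize the concentration. Since $0 < s < p$ forces the strict subcriticality $p^*(s) < p^*$, and since the factor $(1 - a (|x|/R)^{(N-p)/(p-1)})^\beta$ is bounded away from $0$ on $\overline{B_R}$ whenever $a < 1$, the potential $V_a$ is bounded on every compact subset of $\overline{B_R} \setminus \{0\}$. Rellich--Kondrachov then gives $V_a |u_n|^{p^*(s)} \to V_a |u|^{p^*(s)}$ in $L^1_{{\rm loc}}(\overline{B_R}\setminus\{0\})$, so the only possible concentration point is the origin: $\nu = V_a |u|^{p^*(s)}\, dx + \nu_0\, \delta_0$ for some $\nu_0 \in [0,1]$. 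Applying the sharp Hardy--Sobolev inequality (\ref{HS}) to cut-offs of $u_n - u$ supported in shrinking balls around $0$, and using the fact that $V_a(x)$ is asymptotic to $|x|^{-s}$ near the origin, yields the concentration bound $\mu_0 := \mu(\{0\}) \ge C_{N,p,s}\, \nu_0^{p/p^*(s)}$.

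Second I would run the budget inequality. Setting $q := p/p^*(s) \in (0,1)$ and $t := \nu_0$, combining the definition of $I_a$ applied to $u$ with the concentration bound gives
\begin{align*}
I_a &= \lim_{n \to \infty} \int_{B_R} |\nabla u_n|^p\, dx \ge \int_{B_R} |\nabla u|^p\, dx + \mu_0 \\
&\ge I_a \( 1 - t \)^{q} + C_{N,p,s}\, t^{q}.
\end{align*}
The elementary concavity bound $(1-t)^q + t^q \ge 1$ for $q \in (0,1)$ and $t \in [0,1]$ then implies $I_a\, t^q \ge C_{N,p,s}\, t^q$, and the standing hypothesis $I_a < C_{N,p,s}$ forces $t = \nu_0 = 0$.

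Hence $\int_{B_R} V_a |u|^{p^*(s)}\, dx = 1$, so $u \not\equiv 0$; combined with the weak lower semicontinuity $\int |\nabla u|^p\, dx \le I_a$, this shows $u$ attains $I_a$. If $u$ were radial, Proposition \ref{I_a rad} would force $\int |\nabla u|^p\, dx \ge C_{N,p,s}$, contradicting $I_a < C_{N,p,s}$; therefore the minimizer is non-radial. The main technical delicacy is ruling out loss of compactness via mechanisms other than concentration at the origin---in particular the non-radial analogue of the nonlinear scaling (\ref{new scale})---and the point that makes the argument go through is that strict subcriticality $p^*(s) < p^*$ combined with $a < 1$ confines all possible concentration to the single point $\{0\}$.
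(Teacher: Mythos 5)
Your argument is correct in outline, but it follows a genuinely different route from the paper. The paper does not use the P.L.~Lions measure framework: it first upgrades the minimizing sequence to an almost-critical sequence via Ekeland's variational principle, invokes Boccardo--Murat (Lemma \ref{BM}) to get a.e.\ convergence of gradients so that Brezis--Lieb (Lemma \ref{BL}) can be applied to $|\nabla u_m|^p$ for $p\neq 2$, and deduces the dichotomy ``$u\equiv 0$ or compactness''. In the vanishing case it localizes the unit mass near the origin with a cut-off and then \emph{rescales} $x\mapsto x/\ep$, which transforms the quotient into one with parameter $a\ep^{(N-p)/(p-1)}\le \frac{s(p-1)}{p(N-1)}$, where the rearrangement identity (\ref{rearrange}) gives the constant $C_{N,p,s}$; this is how the paper encodes the fact that concentration at the origin costs $C_{N,p,s}$ rather than $I_a$. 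Your version reaches the same key fact more directly, via $V_a(x)=(1+o(1))|x|^{-s}$ as $|x|\to 0$ together with the sharp inequality (\ref{HS}) on small balls, and your budget inequality with $(1-t)^q+t^q\ge 1$ replaces the paper's subadditivity step. What your route buys is the complete avoidance of the Ekeland/Boccardo--Murat machinery, since the bounds $\mu\ge|\nabla u|^p\,dx$ and $\mu(\{0\})\ge C_{N,p,s}\nu_0^{p/p^*(s)}$ need only weak-$*$ convergence and convexity, not a.e.\ gradient convergence. One point to tighten: you propose applying the cut-off to $u_n-u$; for $p\neq 2$ relating $\int|\nabla(\phi(u_n-u))|^p$ back to $\mu(\{0\})$ would reintroduce exactly the Brezis--Lieb-for-gradients issue the measure framework lets you avoid. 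Apply the cut-off to $u_n$ itself instead; the cross terms are then controlled by $\|u_n\|_{L^p(B_\rho)}\to\|u\|_{L^p(B_\rho)}$ and $\|u\|_{L^{p^*}(B_\rho)}\to 0$ as $\rho\to 0$, and the argument closes cleanly. You should also record explicitly that $C_{N,p,s}$ is the sharp constant of (\ref{HS}) over \emph{all} (not only radial) functions and is independent of the radius, which is what legitimizes the concentration bound; the paper uses the equivalent statement $I_{a'}=I_{a',\mathrm{rad}}=C_{N,p,s}$ for small $a'$ from (\ref{rearrange}).
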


In order to show Lemma \ref{concentration level} also in the case where $p \neq 2$, we prepare two Lemmas. Lemma \ref{BM} is concerning with almost everywhere convergence of the gradients of a sequence of solutions. This guarantees to use Lemma \ref{BL} in the proof of Lemma \ref{concentration level}. 

%
%
\begin{lemma}(\cite{Brezis-Lieb}) \label{BL}
For $p \in (0, +\infty)$,
let $(g_m)_{m=1}^{\infty} \subset L^p(\Omega, \mu)$ be a sequence of functions
on a measurable space $(\Omega, \mu)$ such that
\begin{enumerate}
\item[(i)] $\| g_m \|_{L^p(\Omega, \mu)} \le \ ^{\exists} C < \infty$ for all $m \in \N$, and
\item[(ii)] $g_m(x) \to g(x)$ $\mu$-a.e. $x \in \Omega$ as $m \to \infty$.
\end{enumerate}
Then
\begin{equation*}
	\lim_{m \to \infty} \( \| g_m \|_{L^p(\Omega, \mu)}^p - \| g_m -g \|_{L^p(\Omega, \mu)}^p \) = \| g \|_{L^p(\Omega, \mu)}^p.
\end{equation*}
\end{lemma}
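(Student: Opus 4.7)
The plan is to combine a pointwise elementary inequality with Lebesgue's dominated convergence theorem, in the spirit of the original Brezis--Lieb argument. First I would record the following pointwise estimate: for every $\ep > 0$ there exists $C_\ep > 0$ such that for all $a, b \in \re$,
\begin{equation*}
\bigl| |a + b|^p - |a|^p - |b|^p \bigr| \le \ep |a|^p + C_\ep |b|^p.
\end{equation*}
By the joint homogeneity $(a, b) \mapsto (\lambda a, \lambda b)$, this reduces to showing boundedness of the single-variable function $t \mapsto \bigl| |1+t|^p - 1 - |t|^p \bigr|/(\ep + |t|^p)$ on $\re$, which is immediate from its asymptotics as $|t| \to 0$ and $|t| \to \infty$.

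Next, I would apply this inequality pointwise with $a = g_m - g$ and $b = g$ to obtain
\begin{equation*}
\bigl| |g_m|^p - |g_m - g|^p - |g|^p \bigr| \le \ep |g_m - g|^p + C_\ep |g|^p \quad \mu\text{-a.e.}
\end{equation*}
Before proceeding, I would observe that $g \in L^p(\Omega, \mu)$: this follows from hypothesis (ii) and Fatou's lemma, which give $\int_\Omega |g|^p \, d\mu \le \liminf_m \int_\Omega |g_m|^p \, d\mu \le C^p$. I would then introduce the nonnegative auxiliary function
\begin{equation*}
W_{m, \ep} := \Bigl( \bigl| |g_m|^p - |g_m - g|^p - |g|^p \bigr| - \ep |g_m - g|^p \Bigr)_+,
\end{equation*}
which by construction satisfies $0 \le W_{m,\ep} \le C_\ep |g|^p \in L^1(\Omega, \mu)$. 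Since the a.e.\ convergence $g_m \to g$ forces $W_{m,\ep} \to 0$ $\mu$-a.e., Lebesgue's dominated convergence theorem yields $\int_\Omega W_{m,\ep} \, d\mu \to 0$ as $m \to \infty$.

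Finally, combining the bound
\begin{equation*}
\bigl| |g_m|^p - |g_m - g|^p - |g|^p \bigr| \le W_{m, \ep} + \ep |g_m - g|^p
\end{equation*}
with the uniform estimate $\int_\Omega |g_m - g|^p \, d\mu \le K$ (obtained from hypothesis (i) and the $L^p$-bound on $g$, treating $p \ge 1$ and $0 < p < 1$ separately via the convexity, respectively subadditivity, of $t \mapsto t^p$), I would conclude
\begin{equation*}
\limsup_{m \to \infty} \int_\Omega \bigl| |g_m|^p - |g_m - g|^p - |g|^p \bigr| \, d\mu \le K \ep,
\end{equation*}
and the arbitrariness of $\ep > 0$ finishes the proof. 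The delicate point is precisely the design of the cut-off $W_{m,\ep}$: the integrand $\bigl| |g_m|^p - |g_m - g|^p - |g|^p \bigr|$ cannot be dominated by a fixed $L^1$ function uniformly in $m$, so the small absorbing $\ep$-term, paid for globally using the $L^p$-boundedness, is indispensable for making dominated convergence applicable.
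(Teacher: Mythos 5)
Your proof is correct and is essentially the original Brezis--Lieb argument; the paper does not prove this lemma but cites \cite{Brezis-Lieb}, and the proof there is exactly the one you reproduce (the elementary $\ep$-inequality, the truncation $W_{m,\ep}$, and dominated convergence). One cosmetic remark: writing $f(t)=\bigl|\,|1+t|^p-1-|t|^p\bigr|$, boundedness of $f(t)/(\ep+|t|^p)$ by some $M_\ep$ only yields $f(t)\le M_\ep\ep+M_\ep|t|^p$, so to obtain the stated form $f(t)\le\ep+C_\ep|t|^p$ you should instead bound $\sup_{t\neq 0}\,(f(t)-\ep)_+/|t|^p$, which follows from the very same asymptotics at $0$ (where $f$ is continuous with $f(0)=0$, hence $f<\ep$ near the origin) and at $\infty$.
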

Note that we can apply Lemma \ref{BL} to $\mu(dx) = f(x) dx$, where $f$ is any nonnegative $L^1(\Omega)$ function.

%
%
\begin{lemma}(\cite{Boccardo-Murat} Theorem 2.1.)\label{BM}
Let $(u_m)_{m=1}^{\infty} \subset W^{1,p}_0(\Omega)$ be such that, 
as $m \to \infty$,
$u_m \rightharpoonup u$ weakly in $W^{1,p}_0(\Omega)$ and satisfies
\begin{equation*}
	- \lap_p u_m  = g_m + f_m \quad \text{in} \,\,\,\D^{\prime}(\Omega),
\end{equation*}
where $f_m \to 0$ in $W^{-1,p^{\prime}}_0(\Omega)$ and $g_m$ is bounded in $\M (\Omega)$, the space of Radon measures on $\Omega$, i.e.
\begin{equation*}
|< g_m, \phi > | \leq C_K \| \phi \|_{\infty}
\end{equation*}
for all $\phi \in \D (\Omega)$ with $\text{supp} \; \phi \subset K$. 
Then there exists a subsequence, say $u_{m_k}$, such that
\begin{equation*}
	u_{m_k} \to u \quad \text{in} \,\,\,W^{1,\gamma}_0(\Omega ) \quad ({}^{\forall}\gamma < p).
\end{equation*}
\end{lemma}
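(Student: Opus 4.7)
The plan is to follow the original Boccardo-Murat strategy: use truncations $T_k(u_m - u)$ as test functions in the equation, exploit strict monotonicity of the $p$-Laplacian to obtain $\nabla u_m \to \nabla u$ almost everywhere, then upgrade to strong convergence in $W^{1,\gamma}_0(\Omega)$ for $\gamma < p$ via Vitali's theorem. Set $T_k(s) = \max(-k,\min(s,k))$ for $k > 0$ and $\phi_{m,k} = T_k(u_m - u) \in W_0^{1,p}(\Omega)\cap L^\infty(\Omega)$. Since $u_m \rightharpoonup u$ weakly in $W_0^{1,p}$, the sequence $\phi_{m,k}$ is bounded in $W_0^{1,p}$, satisfies $\|\phi_{m,k}\|_\infty \le k$, and $\phi_{m,k}\rightharpoonup 0$ weakly in $W_0^{1,p}$ with almost-everywhere convergence after extraction.

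First, I would test the equation against $\phi_{m,k}$ and analyse each term on the right-hand side. The term $\langle f_m, \phi_{m,k}\rangle$ tends to zero since $f_m \to 0$ in $W^{-1,p'}$ and $\phi_{m,k}$ is bounded in $W_0^{1,p}$. The measure term $\langle g_m, \phi_{m,k}\rangle$ does not vanish in general but, by the Radon measure bound on any compact set containing the support of a cutoff of $\phi_{m,k}$, is controlled by $C_K k$. Meanwhile, weak convergence of $\nabla T_k(u_m - u)$ to $0$ in $L^p$ gives $\int_\Omega |\nabla u|^{p-2}\nabla u \cdot \nabla T_k(u_m-u)\,dx \to 0$. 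Combining these ingredients yields
\begin{equation*}
\limsup_{m\to\infty} \int_{\{|u_m - u|\le k\}} \bigl(|\nabla u_m|^{p-2}\nabla u_m - |\nabla u|^{p-2}\nabla u\bigr)\cdot(\nabla u_m - \nabla u)\,dx \le Ck,
\end{equation*}
where the integrand is nonnegative by the strict monotonicity of $\xi\mapsto |\xi|^{p-2}\xi$.

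Second, I would deduce almost-everywhere convergence of gradients from this inequality. For $p \ge 2$, the pointwise bound $|\xi-\eta|^p \le C\bigl(|\xi|^{p-2}\xi-|\eta|^{p-2}\eta\bigr)\cdot(\xi-\eta)$ gives $\limsup_m \int_{\{|u_m-u|\le k\}} |\nabla u_m - \nabla u|^p\,dx \le Ck$; letting $k\to 0$ along a diagonal subsequence then yields $\nabla u_m \to \nabla u$ almost everywhere in $\Omega$. For $1<p<2$, the weaker pointwise bound $|\xi-\eta|^2\le C(|\xi|+|\eta|)^{2-p}\bigl(|\xi|^{p-2}\xi-|\eta|^{p-2}\eta\bigr)\cdot(\xi-\eta)$ combined with a H\"older estimate against the uniform $L^p$-bound on $|\nabla u_m|$ produces the same a.e.\ conclusion. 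Finally, for fixed $\gamma<p$, the uniform bound on $\|\nabla u_m\|_{L^p}$ implies via H\"older that $|\nabla u_m - \nabla u|^\gamma$ is equi-integrable, so Vitali's theorem together with the a.e.\ convergence of gradients gives $u_m\to u$ strongly in $W^{1,\gamma}_0(\Omega)$.

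The main obstacle is the truncation-monotonicity step. Because the right-hand side contains a bounded Radon measure $g_m$ that does \emph{not} converge to zero in $W^{-1,p'}$, one only obtains a bound of order $k$, not $o(1)$, on the monotone integrand. The Boccardo-Murat insight is that this is already enough: the bound $Ck$ together with nonnegativity of the integrand, via a diagonal limit in $k$, suffices to extract almost-everywhere convergence of gradients, even though the monotone quantity is not uniformly small on all of $\Omega$. Handling this together with the weaker monotonicity inequality in the singular range $1<p<2$ is the technically most delicate aspect of the argument.
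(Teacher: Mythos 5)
The paper does not prove this lemma at all---it quotes it verbatim from Boccardo--Murat (Theorem 2.1)---and your sketch correctly reproduces the standard argument of that reference: truncations $T_k(u_m-u)$ as test functions, the order-$k$ bound on the nonnegative monotone quantity coming from the measure term, almost-everywhere convergence of gradients via strict monotonicity (with the separate elementary inequality in the singular range $1<p<2$), and equi-integrability plus Vitali to conclude strong convergence in $W^{1,\gamma}_0$ for $\gamma<p$. The outline is sound; the only technical point left implicit is the justification of pairing $g_m$ with the non-smooth, merely bounded test function $\varphi\, T_k(u_m-u)$, which uses that $g_m=-\lap_p u_m-f_m$ also lies in $W^{-1,p'}$ together with a density argument, exactly as in the cited source.
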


Before showing Lemma \ref{concentration level}, we apply Lemma \ref{BM} for a minimizing sequence of $I_a$. 
Set
\begin{align*}
J(u) = \| \nabla u \|_{L^p(B_R)}^{p^*(s)} - I_a^{\frac{p^*(s)}{p}} \int_{B_R} |u|^{p^*(s)} V_a(x) \,dx \quad \text{for}\,\,u \in W_0^{1,p}(B_R).
\end{align*}
From the definition of $I_a$, we see that ${\rm inf}_{u \in W_0^{1,p}(B_R)} J(u) = 0$. And
\begin{align*}
J'(u)[\varphi] = p^*(s)\, \| \nabla u \|_{L^p(B_R)}^{p^*(s)-p} \int_{B_R} |\nabla u|^{p-2} \nabla u \cdot \nabla \varphi \,dx - I_a^{\frac{p^*(s)}{p}} p^*(s) \int_{B_R} |u|^{p^*(s)-2} u \varphi V_a(x) \,dx
\end{align*}
for $\varphi \in \( W_0^{1,p} \)^*$. Thus we observe that $J \in C^1 \( W_0^{1,p} \,; \re \)$. Let $( u_m )_{m=1}^\infty \subset W_0^{1,p}(B_R)$ be a minimizing sequence of $I_a$ with $\int_{B_R} |u_m|^{p^*(s)} V_a(x) \,dx =1$ for any $m \in \N$ and $\| \nabla u_m \|^p_{L^p(B_R)} = I_a + o(1)$ as $m \to \infty$. 
By Ekeland's Variational Principle (see e.g. \cite{Struwe}), there exists $( w_m )_{m=1}^\infty \subset W_0^{1,p}(B_R)$ such that 
\begin{align*}
&(i) \,\,0 \le J(w_m) \le J(u_m) = o(1) \quad (\,m \to \infty \,), \\
&(ii) \,\,\| J'(w_m) \|_{(W_0^{1,p})^*} =o (1) \quad ( \,m \to \infty\,),\\
&(iii)\,\, \| \nabla (w_m -u_m) \|_{L^p(B_R)} = o(1)\quad ( \,m \to \infty\,).
\end{align*}
From (iii), we see that $( w_m )_{m=1}^\infty$ is a minimizing sequence of $I_a$ with $\int_{B_R} |w_m|^{p^*(s)} V_a(x) \,dx =1 + o(1)$ and $\| \nabla w_m \|^p_{L^p(B_R)} = I_a + o(1)$ as $m \to \infty$. 
Let $w_m  \rightharpoonup w$ in $W_0^{1,p}(B_R)$ as $m \to \infty$, passing to a subsequence if necessary. From (ii), for any $\varphi \in W_0^{1,p}(B_R)$ we have 
\begin{align*}
\int_{B_R} |\nabla w_m|^{p-2} \nabla w_m \cdot \nabla \varphi \,dx - I_a^{\frac{p^*(s)}{p}} \| \nabla w_m \|_{L^p(B_R)}^{p-p^*(s)} \int_{B_R} |w_m|^{p^*(s)-2} w_m \varphi V_a(x) \,dx = o(1)\,
\end{align*}
which yields that $w_m$ satisfies
\begin{align*}
-{\rm div} (\,|\nabla w_m|^{p-2} \nabla w_m ) = I_a^{\frac{p^*(s)}{p}} \| \nabla w_m \|_{L^p(B_R)}^{p-p^*(s)} |w_m |^{p-2} w_m V_a(x) + f_m \quad \text{in}\,\, \mathcal{D}'(B_R)
\end{align*}
and $f_m \to 0$ in $W^{-1,p^{\prime}}_0(B_R)$. 
From Lemma \ref{BM}, passing to a subsequence if necessary, we have $\nabla w_m \to \nabla w$ a.e. in $B_R$. As a consequence, we can apply Lemma \ref{BL} for $\nabla w_m$ in the proof of Lemma \ref{concentration level}.


\begin{proof}[Proof of Lemma \ref{concentration level}]
Take a minimizing sequence $(u_m)_{m=1}^{\infty} \subset W_0^{1,p}(B_R)$ of $I_a$. Without loss of generality, we can assume that
\begin{align*}
\int_{B_R} |u_m|^{p^*(s)} V_a(x) \,dx =1,\quad \int_{B_R} |\nabla u_m|^p \,dx = I_a + o(1) \,\,{\rm as}\,\, m \to \infty.
\end{align*}
Since $(u_m)$ is bounded in $W_0^{1,p}(B_R)$, passing to a subsequence if necessary, $u_m \rightharpoonup u$ in $W_0^{1,p}(B_R)$ as $m \to \infty$. 
Replacing $u_m$ with $w_m$ (we write $u_m$ again) and applying Lemma \ref{BL}, we have
\begin{align*}
I_a &= \int_{B_R} |\nabla u_m|^p \,dx + o(1)\\
&= \int_{B_R} |\nabla (u_m -u)|^p \,dx + \int_{B_R} |\nabla u|^p \,dx +o(1) \\
&\ge I_a \( \int_{B_R} |u_m-u|^{p^*(s)} V_a (x)\, dx \)^{\frac{p}{p^*(s)}} + I_a \( \int_{B_R} |u|^{p^*(s)} V_a (x)\, dx \)^{\frac{p}{p^*(s)}} +o(1) \\
&\ge I_a \( \int_{B_R} \( |u_m-u|^{p^*(s)} + |u|^{p^*(s)} \) V_a(x) \,dx \)^{\frac{p}{p^*(s)}} +o(1) \\
&= I_a \( \int_{B_R} |u_m|^{p^*(s)} V_a (x)\, dx \)^{\frac{p}{p^*(s)}} +o(1) =I_a
\end{align*}
which implies that either $u \equiv 0$ or $u_m \to u \nequiv 0$ in $L^{p^*(s)}(B_R; V_a(x) dx)$ holds true from the equality condition of the last inequality and the positivity of $I_a$ for $a \in [0,1)$. 
We shall show that $u \nequiv 0$. Assume that $u \equiv 0$. 
Then we claim that
\begin{align}\label{concentrate}
I_{a, {\rm rad}} \le \int_{B_R} |\nabla u_m|^p \,dx +o(1).
\end{align}
If the claim (\ref{concentrate}) is true, then we see that $I_{a, {\rm rad}} \le I_a$ which contradicts the assumption. Therefore $u \nequiv 0$ which implies that $u_m \to u \nequiv 0$ in $L^{p^*(s)}(B_R; V_a(x) dx)$. 
Hence we have 
\begin{align*}
1= \int_{B_R} |u|^{p^*(s)} V_a(x)\, dx, \quad \int_{B_R} |\nabla u|^p \,dx \le \liminf_{m \to \infty} \int_{B_R} |\nabla u_m|^p \,dx = I_a.
\end{align*}
Thus we can show that $u$ is a minimizer of $I_a$. 
We shall show the claim (\ref{concentrate}). 
Since $u_m \to 0$ in $L^r (B_R)$ for any $r \in [1, p^* (0))$ and the potential function $V(x)$ is bounded away from the origin, for any small $\ep > 0$ we have
\begin{align*}
1= \int_{B_R} |u_m|^{p^*(s)} V_a (x)\, dx = \int_{B_{\frac{\ep R}{2}}} |u_m|^{p^*(s)} V_a (x)\, dx +o(1).
\end{align*}
Let $\phi_\ep$ be a smooth cut-off function which satisfies the followings:
\begin{align*}
0 \le \phi_\ep \le 1, \,\,\phi_\ep \equiv 1 \,\,{\rm on}\,\,B_{\frac{\ep R}{2}}(0),\,\,{\rm supp}\, \phi_\ep \subset B_{\ep R} (0), \,\,|\nabla \phi_\ep | \le C\ep^{-1}. 
\end{align*}
Set $\tilde{u_m}(y)=u_m(x)$ and $\tilde{\phi_\ep}(y)=\phi_\ep (x)$, where $y=\frac{x}{\ep}$. Then we have
\begin{align*}
1&= \( \int_{B_{\frac{\ep R}{2}}} |u_m|^{p^*(s)} V_a (x)\, dx \)^{\frac{p}{p^*(s)}} +o(1) \\
&\le \( \int_{B_{\ep R} } \frac{|u_m \phi_\ep |^{p^*(s)}}{|x|^s\( 1-a\, \(\frac{|x|}{R}\)^{\frac{N-p}{p-1}}\)^{\beta}}\, dx \)^{\frac{p}{p^*(s)}} +o(1) \\
&=\( \int_{B_R } \frac{|\tilde{u_m} \tilde{\phi_\ep} |^{p^*(s)}}{|x|^s\( 1-a\ep^{\frac{N-p}{p-1}} \, \(\frac{|x|}{R}\)^{\frac{N-p}{p-1}}\)^{\beta}}\, dx \)^{\frac{p}{p^*(s)}} +o(1) 
\le I_{a\ep^{\frac{N-p}{p-1}}}^{-1} \int_{B_R} |\nabla (\tilde{u_m} \tilde{\phi_\ep } ) |^p \,dx +o(1).
\end{align*}
We see that $a \ep^{\frac{N-p}{p-1}} \le \frac{s(p-1)}{p(N-1)}$ for small $\ep$. Since $I_{a\ep^{\frac{N-p}{p-1}}} =I_{a,\,{\rm rad}}$ for small $\ep$ by (\ref{rearrange}), we have
\begin{align*}
1&\le I_{a, {\rm rad}}^{-1} \int_{B_R} |\nabla (\tilde{u_m} \tilde{\phi_\ep } ) |^p \,dx +o(1) \\
&\le I_{a, {\rm rad}}^{-1} \( \int_{B_{\ep R}} |\nabla u_m |^p \,dx + C \int_{B_{\ep R}} |\nabla u_m |^{p-1} | \nabla \phi_\ep | |u_m| \phi_\ep^{p-1} + |u_m |^p | \nabla \phi_\ep |^p \,dx \) +o(1) \\
&\le I_{a, {\rm rad}}^{-1} \( \int_{B_{\ep R}} |\nabla u_m |^p \,dx + pC\ep^{-1} \| \nabla u_m \|_{L^p}^{p-1} \| u_m \|_{L^p} + C \ep^{-p} \| u_m \|_{L^p}^p \) + o(1)\\
&\le I_{a, {\rm rad}}^{-1} \int_{B_{\ep R}} |\nabla u_m |^p \,dx + o(1) 
\le I_{a, {\rm rad}}^{-1} \int_{B_R} |\nabla u_m |^p \,dx +o(1).
\end{align*}
Therefore we obtain the claim (\ref{concentrate}). The proof of Lemma \ref{concentration level} is now complete.
\end{proof}

Finally, we give a proof of Theorem \ref{Main}.

\begin{proof}[Proof of Theorem \ref{Main}]\textcolor{white}{a}\\
(i) Let $s=p$. From Lemma \ref{radial nomi} and Proposition \ref{I_a rad}, we easily obtain $I_a = I_{a, {\rm rad}} = C_{N, p, p} = (\frac{N-p}{p})^p$ and the non-attainability of $I_a$. We omit the proof.

\noindent
(ii) Let $s=0$. From Proposition \ref{Sobo}, we obtain $I_a = C_{N, p, 0} (1-a)^{\frac{N-1}{N} p}$ and the non-attainability of $I_a$.

\noindent
(iii) Let $0 < s < p$. Note that $I_1 = 0$ by Proposition \ref{Posi I_a} and $I_a = I_{a, {\rm rad}} = C_{N, p, s}$ at least for $a \in [0, \frac{s(p-1)}{p(N-1)}]$ by (\ref{rearrange}). Since $I_a$ is continuous and monotone decreasing with respect to $a \in [0, 1]$ by Lemma \ref{conti I_a}, there exists $a_* \in [\frac{s(p-1)}{p(N-1)}, 1)$ such that $I_a < I_{a, {\rm rad}} = C_{N, p, s}$ for $a \in (a_*, 1)$ and $I_a = I_{a, {\rm rad}} = C_{N, p, s}$ for $a \in [0, a_*]$. 
Hence $I_a$ is attained by a non-radial function for $a \in (a_*, 1)$ by Lemma \ref{concentration level}. 
On the other hand, if we assume that there exists a nonnegative minimizer $u$ of $I_a$ for $a < a_*$, then we can show that at least, $u \in C^1(B_R \setminus \{ 0\})$ and $u >0$ in $B_R \setminus \{ 0\}$ by standard regularity argument and strong maximum principle to the Euler-Lagrange equation (\ref{EL}), see e.g. \cite{D}, \cite{PS}. 
Therefore we see that
\begin{align*}
I_{a,{\rm rad}} = I_a = \dfrac{\int_{B_R} | \nabla u |^p \,dx}{\( \int_{B_R} \frac{| u |^{p^*(s)}}{|x|^s\( 1-a\, \(\frac{|x|}{R}\)^{\frac{N-p}{p-1}}\)^{\beta}}\, dx \)^{\frac{p}{p^*(s)}}} 
> \dfrac{\int_{B_R} | \nabla u |^p \,dx}{\( \int_{B_R} \frac{| u |^{p^*(s)}}{|x|^s\( 1-a_* \(\frac{|x|}{R}\)^{\frac{N-p}{p-1}}\)^{\beta}}\, dx \)^{\frac{p}{p^*(s)}}} 
\ge I_{a, {\rm rad}}.
\end{align*}
This is a contradiction. Therefore $I_a$ is not attained for $a \in [0, a_*)$. 

Finally, we show that $a_* > A > \frac{s(p-1)}{p(N-1)}$, where $A$ is defined in Theorem \ref{Main} (iii). 
Note that the potential function $V_a$ is increasing with respect to $a \in [0, 1]$. 
Since $V_a(x)$ has one critical point at $|x|=R_a := \( \frac{s (p-1)}{a p (N-1)} \)^{\frac{p-1}{N-p}} R$, $V_a$ is decreasing for $|x| < R_a$ and increasing for $|x| > R_a$. Therefore we see that $R^{-s} ( 1-a)^{-\beta} = V_a (R) = V_1 (R_1) = R^{-s} \( \frac{s (p-1)}{p (N-1)} \)^{-s} \( 1- \frac{s (p-1)}{p (N-1)} \)^{-\beta}$ for $a = A$ by the shape of $V_a$. Let $\tilde{R} \in (0, R_1)$ satisfy $V_A(\tilde{R}) = V_A (R) = V_1 (R_1)$. 
Then we have
$
V_A^{\#} (x) = V_A (x) \,\, \text{for}\,\, x \in B_{\tilde{R}}.
$
Since $V_A^{\#}(x)$ is decreasing for $x \in B_R \setminus B_{\tilde{R}}$, $V_1 (x)$ is increasing for $x \in B_R \setminus B_{R_1}$, and $\tilde{R} < R_1$, we see that 
\begin{align*}
V_a^{\#}(x) < V_1 (x) \,\, \text{for any}\,\, x \in B_R\,\,\text{at least for}\,\, a \in [A, A+\ep].
\end{align*}
Then for any $u \in W_0^{1,p}(B_R)$ we have
\begin{align*}
\dfrac{\int_{B_R} | \nabla u |^p \,dx}{\( \int_{B_R} | u |^{p^*(s)} V_{A+\ep} (x) \, dx \)^{\frac{p}{p^*(s)}}} 
&\ge \dfrac{\int_{B_R} | \nabla u^{\#} |^p \,dx}{\( \int_{B_R} | u^{\#} |^{p^*(s)} V_{A+\ep}^{\#} (x) \, dx \)^{\frac{p}{p^*(s)}}} \\
&\ge \dfrac{\int_{B_R} | \nabla u^{\#} |^p \,dx}{\( \int_{B_R} | u^{\#} |^{p^*(s)} V_1 (x) \, dx \)^{\frac{p}{p^*(s)}}} \ge I_{1, \text{rad}}.
\end{align*}
Hence we have $I_{A+\ep} \ge I_{1, \text{rad}} = C_{N,p,s}$ which implies that $a_* > A$.
\end{proof}


\begin{remark}
We can also show Theorem \ref{Main} for general bounded domains with Lipschitz boundary in the similar way in \cite{S}, since we can generalize Proposition \ref{Posi I_a} to such domains.
\end{remark}

%
%

\section{Appendix}\label{App}

In this section, we give remarks of the following two nonlinear scalings (\ref{scale N}), (\ref{scale p}) for non-radial functions $v,w$ on the unit ball $B_1$.
\begin{align}\label{scale N}
v_\la (x) &= \la^{-\frac{N-1}{N}} v(y), \,{\rm where} \,\,y= \( \frac{|x|}{b} \)^{\la -1} x,  \,\,b \ge 1,\,\, \la \le 1,\\
\label{scale p}
w_\la (x) &= \la^{\frac{N-p}{p}} w(y), \,{\rm where} \,\,y=  \la \left[ 1 + a \( \la^{\frac{N-p}{p-1}} -1 \)  |x|^{\frac{N-p}{p-1}}  \right]^{\,-\frac{p-1}{N-p}} x, \,\,0\le a \le 1,\,\, \la \ge 1.
\end{align}
Note that each $y$ in (\ref{scale N}), (\ref{scale p}) is in $B_1$ thanks to the restriction of the length of $\la$. If $b=1$ or $a=1$ in (\ref{scale N}), (\ref{scale p}), then we do not need to restrict the length of $\la$. 
From \S \ref{Trans}, we see that two scalings (\ref{scale N}), (\ref{scale p}) are equivalent to the usual scaling: 
\begin{align}\label{usual scale}
u_\la (x)=\la^{\frac{N-p}{p}} u(y), \,{\rm where} \,\,y= \la x,\,\, \la >0.
\end{align}
only for radial functions $u$ by the transformations (\ref{ST trans}), (\ref{trans}). 
However each response of each derivative norm $\| \nabla (\cdot)\|_{L^N}$, $\| \nabla (\cdot)\|_{L^p}$ to each scaling (\ref{scale N}), (\ref{scale p}) is different for non-radial functions. This is a different structure between the minimization problem $I_a$ in this paper and it in \cite{S}, which is related to the embedding: $W_{0}^{1,N}(B_1) \hookrightarrow L^{q} \( B_1; \frac{dx}{|x|^N (\log \frac{b}{|x|})^{\frac{N-1}{N}q +1}} \)$ with $q \ge N$. 
More precisely, we show the followings.

\begin{prop}\label{bdd}
Let $b \ge 1$ and $a \in [0, 1]$. 
Then $\{ v_{\frac{1}{m}} \}_{m=1}^\infty \subset W_{0}^{1,N}(B_1)$ is unbounded for $v \in C_c^{1}(B_1)$ with $\nabla_{\mathbb{S}^{N-1}}v \nequiv 0$. On the other hand, $\{ w_m \}_{m=1}^\infty \subset W_{0}^{1,p}(B_1)$ is bounded for any $w \in C_c^{1}(B_1)$. 
\end{prop}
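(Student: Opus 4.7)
The two assertions have different flavors, so I would treat them separately. For the unboundedness of $\{v_{1/m}\}$, the plan is a direct change-of-variables computation that forces the angular gradient to contribute a divergent factor $\la^{-N}$. For the boundedness of $\{w_m\}$, the plan is to conjugate the scaling (\ref{scale p}) with the transformation (\ref{trans}) so that the usual scale invariance on the larger ball applies, and then to compare $|\nabla w_\la|$ with $|L_p w_\la|$ using the structural inequality that $L_p$ enhances the angular part of $\nabla$.

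For the first part, write $x = r\w$ and $y = t\w$ with $t = r^{\la} b^{1-\la}$, so that $\frac{dt}{dr} = \la \,\frac{t}{r}$. Decompose $|\nabla v(y)|^2 = A(y) + B(y)$ where $A = (\partial_t v)^2$ and $B = t^{-2}|\nabla_{\mathbb{S}^{N-1}} v|^2$. A chain-rule computation yields $|\nabla v_\la(r\w)|^2 = \la^{-2(N-1)/N}(t/r)^2 [\la^2 A + B]$, and after change of variables $\frac{dr}{r} = \frac{dt}{\la t}$, together with the fact that the integrand vanishes for $|y| > 1$, one obtains
\begin{equation*}
\int_{B_1} |\nabla v_\la|^N\, dx = \la^{-N} \int_{B_1} \left[\la^2 A + B\right]^{N/2}\, dy.
\end{equation*}
Since $v \in C^1$ with $\nabla_{\mathbb{S}^{N-1}} v \nequiv 0$, continuity provides an open set on which $B$ is strictly positive, so $C_0 := \int_{B_1} B^{N/2}\, dy > 0$. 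The pointwise bound $[\la^2 A + B]^{N/2} \ge B^{N/2}$ then gives $\|\nabla v_{1/m}\|_{L^N}^N \ge C_0\, m^N \to \infty$.

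For the second part, let $W$ be the function on $B_T$ that corresponds to $w$ via the transformation (\ref{trans}), with $T = (1-a)^{-(p-1)/(N-p)}$ (interpreting $T = \infty$ when $a=1$). Because $w$ has compact support in $B_1$, the same holds for $W$ in $B_T$. A direct computation with the composition law of (\ref{trans}) shows that the counterpart of $w_\la$ under (\ref{trans}) is precisely the usual scaling $W_\la(y) = \la^{(N-p)/p} W(\la y)$ on $B_T$. Combining this with the identity (\ref{nab non-rad}) and the usual $L^p$-scale invariance on $B_T$ gives
\begin{equation*}
\int_{B_1} |L_p w_\la|^p\, dx = \int_{B_T} |\nabla W_\la|^p\, dy = \int_{B_T} |\nabla W|^p\, dy = \int_{B_1} |L_p w|^p\, dx.
\end{equation*}
Because the weight $[1 - a r^{(N-p)/(p-1)}]^{-2} \ge 1$ separating $|L_p w_\la|^2$ from $|\nabla w_\la|^2$ only enhances the angular term, we have $|\nabla w_\la|^p \le |L_p w_\la|^p$ pointwise, so $\|\nabla w_m\|_{L^p(B_1)}^p \le \|L_p w\|_{L^p(B_1)}^p$. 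The right-hand side is finite because $[1-ar^{(N-p)/(p-1)}]^{-1}$ is bounded on the compact support of $w$.

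The main obstacle, as I see it, is the change-of-variables calculation for the first scaling: the Jacobian factor $t/r$ from $\frac{dt}{dr} = \la t/r$ has to be tracked carefully, and the crucial observation is that the angular contribution $B$ enters the bracket $[\la^2 A + B]$ without a compensating power of $\la^2$, which is exactly what produces the divergent factor $\la^{-N}$ after integration. Once this identity is established, unboundedness is immediate, and the contrast with the second scaling — where the framework of Section 2 furnishes the uniform bound essentially for free via the pointwise inequality $|\nabla w_\la| \le |L_p w_\la|$ — becomes transparent.
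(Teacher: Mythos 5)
Your proof is correct. The first half is essentially the paper's own argument: the paper performs the same polar-coordinate change of variables and arrives at $\int_{\mathbb{S}^{N-1}}\int_0^1 \left| \partial_t v\,\omega + \frac{1}{\lambda t}\nabla_{\mathbb{S}^{N-1}}v \right|^N t^{N-1}\,dt\,dS_\omega$, which is algebraically identical to your $\lambda^{-N}\int_{B_1}[\lambda^2 A+B]^{N/2}\,dy$; your explicit lower bound $\lambda^{-N}\int_{B_1}B^{N/2}\,dy$ merely makes quantitative the divergence that the paper asserts directly. The second half is where you genuinely diverge from the paper. There, the paper redoes the change of variables for the scaling (\ref{scale p}) from scratch, obtains $\int_{\mathbb{S}^{N-1}}\int_0^1 \left|\partial_t w\,\omega+\left(r\,\tfrac{dt}{dr}\right)^{-1}\nabla_{\mathbb{S}^{N-1}}w\right|^p t^{N-1}\,dt\,dS_\omega$, and concludes boundedness by letting $\lambda\to\infty$ and checking that the limiting integral is finite. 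You instead conjugate with the transformation (\ref{trans}), combine the identity (\ref{nab non-rad}) with the usual scale invariance on $B_T$ to obtain the exact invariance $\|L_p w_\lambda\|_{L^p(B_1)}=\|L_p w\|_{L^p(B_1)}$, and finish with the pointwise domination $|\nabla u|\le |L_p u|$, which is valid because the radial and angular components are orthogonal and the weight on the angular part is $\ge 1$. Your route buys a clean uniform bound $\|\nabla w_m\|_{L^p}\le\|L_p w\|_{L^p}$ valid for every $m$ simultaneously (the paper's limit argument, strictly speaking, controls only large $\lambda$ and tacitly uses continuity in $\lambda$ for the intermediate range), at the cost of importing the Section \ref{Trans} machinery; the paper's self-contained computation has the advantage of exhibiting the exact limiting differential operator, which is the structural point the appendix wants to make in contrasting the two scalings.
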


\begin{proof}[Proof of Proposition \ref{bdd}]
We use the polar coordinates. 
Let $r=|x|, t= |y|, \w \in \mathbb{S}^{N-1}$. By the scaling (\ref{scale N}), we have
\begin{align*}
t= b^{1-\la} r^\la \iff r= b^{1-\frac{1}{\la}} t^{\frac{1}{\la}}.
\end{align*}
Then we have
\begin{align*}
\frac{dt}{dr} &= \la \,\frac{t}{r} \to 0 \quad (\la \to 0).
\end{align*}
Therefore we see that for $\la \le 1$
\begin{align*}
\int_{B_1} |\nabla v_\la (x)|^{N} \,dx
&= \int_{B_R} |\nabla v_\la (x)|^{N} \,dx \\
&=  \int_{\mathbb{S}^{N-1}} \int_0^R \left| \frac{\pd v_\la}{\pd r} \w + \frac{1}{r} \nabla_{\mathbb{S}^{N-1}} v_\la \,\right|^N r^{N-1} \,dr dS_{\w}\\
&= \la^{1-N} \int_{\mathbb{S}^{N-1}} \int_0^1 \left| \frac{\pd v}{\pd t} \w + \( r \frac{dt}{dr} \)^{-1} \nabla_{\mathbb{S}^{N-1}} v \,\right|^N \( r \frac{dt}{dr} \)^{N-1} \,dt dS_{\w} \\
&= \int_{\mathbb{S}^{N-1}} \int_0^1 \left| \frac{\pd v}{\pd t} \w + \( r \frac{dt}{dr} \)^{-1} \nabla_{\mathbb{S}^{N-1}} v \,\right|^N t^{N-1} \,dt dS_{\w} \\
&= \int_{\mathbb{S}^{N-1}} \int_0^1 \left| \frac{\pd v}{\pd t} \w + \frac{1}{\la t}  \nabla_{\mathbb{S}^{N-1}} v \,\right|^N t^{N-1} \,dt dS_{\w} \to \infty \quad (\la \to 0).
\end{align*}
Hence we observe that $\{ w_{\frac{1}{m}} \}_{m=1}^\infty \subset W_{0}^{1,N}(B_1)$ is unbounded. 


On the other hand, by the scaling (\ref{scale p}), we have
\begin{align*}
t= \la r \left[ 1 + a \( \la^{\frac{N-p}{p-1}} -1 \)  r^{\frac{N-p}{p-1}}  \right]^{\,-\frac{p-1}{N-p}} = \la \left[ r^{-\frac{N-p}{p-1}} + a \( \la^{\frac{N-p}{p-1}} -1 \)    \right]^{\,-\frac{p-1}{N-p}}
\end{align*}
which is equal to 
\begin{align*}
r^{\,-\frac{N-p}{p-1}} = \la^{\frac{N-p}{p-1}} t^{-\frac{N-p}{p-1}} - a (\la^{\frac{N-p}{p-1}} -1).
\end{align*}
Then we have
\begin{align*}
\frac{dt}{dr} &= \frac{t}{r} \,\left[ 1 + a \( \la^{\frac{N-p}{p-1}} -1 \)  r^{\frac{N-p}{p-1}}  \right]^{-1}\\
&= \frac{t}{r} \,\left[ 1 + \frac{a \( \la^{\frac{N-p}{p-1}} -1 \)}{ \la^{\frac{N-p}{p-1}} t^{-\frac{N-p}{p-1}} - a (\la^{\frac{N-p}{p-1}} -1) } \right]^{-1}\\
&= \frac{t}{r} \,\left[ 1 + \frac{a}{ \frac{\la^{\frac{N-p}{p-1}}}{\( \la^{\frac{N-p}{p-1}} -1 \)} t^{-\frac{N-p}{p-1}} - a } \right]^{-1} 
\to \frac{t}{r} \,\left[ 1 + \frac{a}{ t^{-\frac{N-p}{p-1}} - a } \right]^{-1} \quad (\la \to \infty).
\end{align*}
Therefore we see that for $\la \ge 1$
\begin{align*}
\int_{B_1} |\nabla w_\la (x)|^{p} \,dx
&= \int_{B_R} |\nabla w_\la (x)|^{p} \,dx \\
&=  \int_{\mathbb{S}^{N-1}} \int_0^R \left| \frac{\pd w_\la}{\pd r} \w + \frac{1}{r} \nabla_{\mathbb{S}^{N-1}} w_\la \,\right|^p r^{N-1} \,dr dS_{\w}\\
&= \la^{N-p} \int_{\mathbb{S}^{N-1}} \int_0^1 \left| \frac{\pd w}{\pd t} \w + \( r \frac{dt}{dr} \)^{-1} \nabla_{\mathbb{S}^{N-1}} w \,\right|^p \( \frac{dt}{dr} \)^{p-1} r^{N-1} \,dt dS_{\w} \\
&= \int_{\mathbb{S}^{N-1}} \int_0^1 \left| \frac{\pd w}{\pd t} \w + \( r \frac{dt}{dr} \)^{-1} \nabla_{\mathbb{S}^{N-1}} w \,\right|^p t^{N-1} \,dt dS_{\w} \\
&\to \int_{\mathbb{S}^{N-1}} \int_0^1 \left| \frac{\pd w}{\pd t} \w + \frac{1}{t} \,\left[ 1 + \frac{a}{ t^{-\frac{N-p}{p-1}} - a } \right] \nabla_{\mathbb{S}^{N-1}} w \,\right|^p t^{N-1} \,dt dS_{\w}\,\,(\la \to \infty).
\end{align*}
Hence we observe that $\{ w_m \}_{m=1}^\infty \subset W_{0}^{1,p}(B_1)$ is bounded. 
\end{proof}


As a consequence of Proposition \ref{bdd}, we can also construct a non-compact non-radial sequence of the embedding: $W_0^{1,p}(B_1) \hookrightarrow L^{p^*(s)}(B_1; V_a(x)\,dx)$ which is related to our minimization problem $I_a$. 
On the other hand, in the view of the scaling, it seems difficult to construct a non-compact non-radial sequence of the embedding: $W_{0}^{1,N}(B_1) \hookrightarrow L^{q} \( B_1; \frac{dx}{|x|^N (\log \frac{b}{|x|})^{\frac{N-1}{N}q +1}} \)$ which is related to the minimization problem in \cite{S}. Therefore the following natural question arises.\\

\noindent
{\it Let $N=2$ and $b>1$. Consider the orthogonal decomposition: $W_{0}^{1,2}(B_1) = W_{0, {\rm rad}}^{1,2}(B_1) \,\oplus \, \( W_{0, {\rm rad}}^{1,2}(B_1) \)^{\perp}$. Is the embedding: $\( W_{0, {\rm rad}}^{1,2}(B_1) \)^{\perp} \hookrightarrow L^{q} \( B_1; \frac{dx}{|x|^N (\log \frac{b}{|x|})^{\frac{N-1}{N}q +1}} \)$ compact?}\\

\noindent
If the above question is affirmative, we may say that ``Non-radial compactness'' occurs in some sense. That is an opposite phenomenon of Strauss's radial compactness (ref. \cite{St}). Thus it may be interesting if the question is affirmative.


\section*{Acknowledgment}
This work was (partly) supported by Osaka City University Advanced
Mathematical Institute (MEXT Joint Usage/Research Center on Mathematics
and Theoretical Physics). 
The author was supported by JSPS KAKENHI Early-Career Scientists, No. JP19K14568.



\begin{thebibliography}{99}
\bibitem{Al}
Alvino, A., {\it A limit case of the Sobolev inequality in Lorentz spaces}, Rend. Accad. Sci. Fis. Mat. Napoli (4) 44 (1977), 105-112 (1978).

\bibitem{Au}
Aubin, T., {\it Probl\`emes isop\'erim\'etriques et espaces de Sobolev}, J. Differential Geometry 11 (1976), no. 4, 573-598.

\bibitem{BG}
Baras, P., Goldstein, J. A., {\it The heat equation with a singular potential}, 
\newblock Trans. Amer. Math. Soc., 284 (1984), 121-139.

\bibitem{BP}
W. Beckner, W., Pearson, M., {\it On sharp Sobolev embedding and the logarithmic Sobolev inequalitiy}, Bull. London Math. Soc., 30 (1998), 80-84.

\bibitem{Boccardo-Murat}
Boccardo, L., Murat, F., {\it Almost everywhere convergence of the gradients of solutions to elliptic and parabolic equations}, Nonlinear Anal. TMA. 19 (1992), 581-597.

\bibitem{Brezis-Lieb}
H. Brezis, E. Lieb, {\it A relation between pointwise convergence of functions and convergence of functionals}, 
\newblock Proc. Amer. Math. Soc., 88 (1983), 486-490. 

\bibitem{BV}
Brezis, H., V\'{a}zquez, J. L., {\it Blow-up solutions of some nonlinear elliptic problems}, 
\newblock Rev. Mat. Univ. Complut. Madrid 10 (1997), No. 2, 443-469.

\bibitem{CRT}
Cassani, D., Ruf, B., Tarsi, C., {\it Optimal Sobolev type inequalities in Lorentz spaces}, Potential Anal. 39 (2013), no. 3, 265-285.

\bibitem{D}
DiBenedetto, E., {\it $C^{1+\alpha}$ local regularity of weak solutions of degenerate elliptic equations}, Nonlinear Anal. 7 (1983), No. 8, 827-850.

\bibitem{F}
Flucher, M., {\it Extremal functions for the Trudinger-Moser inequality in 2 dimensions}, Comment. Math. Helv. 67 (1992), no. 3, 471-497.

\bibitem{HK}
Horiuchi, T., Kumlin, P., {\it On the Caffarelli-Kohn-Nirenberg-type inequalities involving critical and supercritical weights}, Kyoto J. Math. 52 (2012), no. 4, 661-742. 

\bibitem{I}
Ioku, N., {\it Attainability of the best Sobolev constant in a ball}, Math. Ann. 375 (2019), no. 1-2, 1-16.

\bibitem{II}
Ioku, N., Ishiwata, M., {\it A note on the scale invariant structure of critical Hardy inequalities}, Geometric properties for parabolic and elliptic PDE's, 97-120, 
Springer Proc. Math. Stat., 176, Springer, [Cham], 2016. 

\bibitem{L}
Lieb, E. H., {\it Sharp constants in the Hardy-Littlewood-Sobolev and related inequalities}, Ann. of Math. (2), 118 (1983), no. 2, 349-374.

\bibitem{PS}
Pucci, P., Serrin, J., {\it The maximum principle}, Progress in Nonlinear Differential Equations and their Applications, \textbf{73}, Birkhauser Verlag, Basel, (2007).

\bibitem{S}
Sano, M., {\it Extremal functions of generalized critical Hardy inequalities}, J. Differential Equations 267 (2019), no. 4, 2594-2615.

\bibitem{S(RIMS)}
Sano, M., {\it Two limits on Hardy and Sobolev inequalities}, arXiv:1911.04105.

\bibitem{SS}
Sano, M., Sobukawa, T., {\it Remarks on a limiting case of Hardy type inequalities}, ArXiv: 1907.09609.

\bibitem{ST}
Sano, M., Takahashi, F., {\it Scale invariance structures of the critical and the subcritical Hardy inequalities and their improvements}, Calc. Var. Partial Differential Equations 56 (2017), no. 3, Art. 69, 14 pp.

\bibitem{SSW}
Smets, D., Willem, M., Su, J., {\it Non-radial ground states for the H\'enon equation}, Commun. Contemp. Math. 4 (2002), no. 3, 467-480.

\bibitem{St}
Strauss, W. A., {\it Existence of solitary waves in higher dimensions}, Comm. Math. Phys. \textbf{55} (1977), no. 2, 149-162.

\bibitem{Struwe}
Struwe, M., {\it Variational methods. Applications to nonlinear partial differential equations and Hamiltonian systems. Fourth edition}, Ergebnisse der Mathematik und ihrer Grenzgebiete. 3. Folge. A Series of Modern Surveys in Mathematics [Results in Mathematics and Related Areas. 3rd Series. A Series of Modern Surveys in Mathematics], 34. Springer-Verlag, Berlin, (2008).

\bibitem{T}
Talenti, G., {\it Best constant in Sobolev inequality}, Ann. Mat. Pura Appl. (4) \textbf{110} (1976), 353-372. 

\bibitem{Tru}
Trudinger, N. S., {\it On imbeddings into Orlicz spaces and some applications}, J. Math. Mech. 17 (1967), 473-483.

\bibitem{VZ}
V\'{a}zquez, J. L., Zuazua, E., {\it The Hardy inequality and asymptotic behaviour of the heat equation with an inverse-square potential}, J. Funct. Anal., 173 (2000), 103-153.

\bibitem{Z}
Zographopoulos, N. B. {\it Existence of extremal functions for a Hardy-Sobolev inequality}, J. Funct. Anal. 259 (2010), no. 1, 308-314.
\end{thebibliography}
\end{document}